\tikzset{
	labl/.style={anchor=north, rotate=90, inner sep=.5mm}
}
\def\AA{\mathbb{A}}
\def\CC{\mathbb{C}}
\def\QQ{\mathbb{Q}}
\def\RR{\mathbb{R}}
\def\ZZ{\mathbb{Z}}
\def \lra{\longrightarrow}
\def \a{\alpha}
\def \g {\gamma}
\def \l {\lambda}
\def \s {\sigma}
\def \u {u_{K/F}}
\def \Z {\wh{Z}}
\def \s {\sigma}
\def \W {W_{K/F}}
\def \HH {H_{1}(C_K,\wh{L})}
\def \ZH {Z_{1}(C_K,\wh{L})}
\def \HW {H_{1}(\W,\wh{L})}
\def \ZW {Z_{1}(\W,\wh{L})}
\def \wh{\widehat}
\def \H{\widehat{H}}
\def \L{\widehat{L}}
\def \G {\mathfrak{G}}
\def\F {\mathfrak{F}}
\DeclareMathOperator{\coinf}{Coinf}
\DeclareMathOperator{\tr}{Tr_{1}}
\DeclareMathOperator{\ttr}{Tr}
\DeclareMathOperator{\Tor}{Tor}
\DeclareMathOperator{\Cor}{Cor}
\DeclareMathOperator{\Gal}{Gal}
\DeclareMathOperator{\GL}{GL}
\DeclareMathOperator{\Hom}{Hom}
\DeclareMathOperator{\Res}{Res}
\DeclareMathOperator{\Inf}{Inf}
\theoremstyle{plain}
\newtheorem{thm}{Theorem}[subsection]
\newtheorem*{thm*}{Theorem}
\newtheorem{lem}[thm]{Lemma}
\newtheorem*{con*}{Conjecture}
\newtheorem{prop}[thm]{Proposition}
\newtheorem*{prop*}{Proposition}
\theoremstyle{definition}
\theoremstyle{definition}
\newtheorem{rmrk}[thm]{Remark}
\newtheorem*{rmrk*}{Remark}
\newtheorem*{exmp*}{Example}
\newtheorem*{obs*}{Observation}
\newtheorem{nota}[thm]{Notation}
\newtheorem*{nota*}{Notation}
\newtheorem{num}[thm]{\unskip}
\title{On the $p$-adic Langlands correspondence for algebraic tori}
	\author{Christopher Birkbeck}
\address{Department of Mathematics, University College London, Gower street, London, WC1E 6BT}
\email{c.birkbeck@ucl.ac.uk}
\begin{document}

\maketitle

\begin{abstract}We extend the results by R.P.  Langlands on representations of (connected) abelian algebraic groups. This is done by considering characters into any divisible abelian topological group. With this we can then prove what is known as the abelian case of the $p$-adic Langlands program.

\end{abstract}

\section*{\bf Introduction.}

In \cite{lang}, Langlands relates representations of the Weil group of a finite Galois extension $K$ of a number field $F$ into the $L$-group of an algebraic torus $T$, with  representations of $T(\AA_F) / T(F)$  into $\CC^\times$. The main goal of this paper is to extend these results by allowing representations of $T(\AA_F) / T(F)$ into a more general class of  groups. In particular, we want to look at representations  into $\CC_{p}^{\times}$ (the units of the completion of an algebraic closure of $\QQ_{p}$), which gives us what is called the abelian case of the $p$-adic Langlands program. These results are well-known to the experts but there appears to be no source in the literature for them. We have tried to stay faithful to the main ideas in Langlands' paper \cite{lang}, but aim to present the results in more detail and in more generality.

Before stating our main theorem let us setup some notation. Recall that there is one-to-one correspondence between algebraic tori defined over a field $F$, that split over a finite Galois extension $K$ of $F$, and equivalence classes of lattices on which $\Gal(K/F)$ acts. Here by lattice we mean a finitely generated free $\ZZ$-module (i.e. isomorphic to $\ZZ^{n}$ for some $n \in \ZZ_{ \geq 0}$). If $T$ is an algebraic torus and it corresponds to the lattice $L$, then the group $T(K)$ of $K$-rational points corresponds to the $\Gal(K/F)$-module $\Hom(L,K^{\times})$. Moreover, we have that $T(K)^{\Gal(K/F)} = T(F)$.

\begin{nota*}
	\begin{enumerate}
		\item  We let $F$ be any local or global field and $K$ a finite Galois extension of $F$   and let $W_{K/F}$ denote the relative Weil group as defined in \cite{tatewg}.
		\item Let $\wh{L}= \Hom(L,\ZZ)$ and  $\wh{T}_D=\Hom(\wh{L},D)$, where $D$ is any divisible abelian topological group with trivial $\Gal(K/F)$-action.
		\item Let $$ C_{E} = \begin{cases} \text{the idele class group of } E & \text{if }  E \text{ is a global field} \\
		E^{\times} & \text{if } E \text{ is a local field. }\end{cases}$$
		\item If $A,B$ are two topological groups, we let $\Hom_{cts}(A,B)$ represents the group of continuous group homomorphisms from $A$ to $B$, and similarly $Z_{cts}^{1}(A,B)$, $B_{cts}^{1}(A,B)$ represent the continuous 1-cocycles and 1-coboundaries, respectively, and $H_{cts}^{1}=Z_{cts}^{1} / B_{cts}^{1}$.	
	\end{enumerate}
\end{nota*}

Our main theorems are as follows:

\begin{thm*}There is a canonical isomorphism  \[H_{cts}^{1}(\W,\wh{T}_D) \overset{\sim}{\lra} \Hom_{cts}(\Hom_{\Gal(K/F)}(L,C_K),D).\]
	where $W_{K/F}$ has the usual topology as defined in \cite[Definition 1.1]{tatewg}, and $\wh{T}_D$ has the topology induced from $D$.	
\end{thm*}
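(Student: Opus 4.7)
The plan is to adapt the approach of Langlands in \cite{lang}, based on the observation that his proof for the case $D=\CC^\times$ uses $\CC^\times$ only through the fact that it is a divisible abelian topological group. Since $D$ is divisible by assumption, the argument should go through essentially verbatim, with some additional care needed for the topologies and the general target $D$.

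The proof will proceed in two main steps. \emph{Step 1 (Universal coefficients).} Since $\wh{L}$ is a finitely generated free $\ZZ$-module and $D$ is divisible (hence injective as an abstract $\ZZ$-module), one obtains a natural isomorphism
\[H_{cts}^{1}(\W, \Hom(\wh{L}, D)) \cong \Hom_{cts}(H_{1}(\W, \wh{L}), D),\]
where $H_{1}(\W, \wh{L})$ denotes the first continuous group homology of $\W$ with coefficients in the discrete module $\wh{L}$, topologised as a quotient using the topology on $\W$, and $\wh{T}_{D}=\Hom(\wh{L}, D)$ carries the topology induced from $D$. The role of divisibility is to kill the $\Ext^{1}$-term in the usual short exact sequence from universal coefficients, so that one recovers an honest isomorphism rather than a short exact sequence. \emph{Step 2 (Computation of $H_1$).} One establishes a natural topological isomorphism
\[H_{1}(\W, \wh{L}) \cong \Hom_{\Gal(K/F)}(L, C_K).\]
This is essentially Theorem 2 of \cite{lang}: it is proved by applying the five-term exact sequence in homology to the extension $1 \to C_K \to \W \to \Gal(K/F) \to 1$ and invoking local (respectively global) class field theory in the form of the Tate--Nakayama isomorphism to identify the resulting $\Gal(K/F)$-homology groups with $\Hom_{\Gal(K/F)}(L, C_K)$. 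The theorem then follows by combining Steps 1 and 2.

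The main obstacle is Step 2, which is the heart of Langlands's argument: one has to manipulate continuous Tate cohomology, apply Tate--Nakayama duality carefully (separately in the local and global cases), and verify that the resulting group-theoretic isomorphism is actually a homeomorphism with respect to the natural topologies on each side. Step 1 is essentially formal given divisibility of $D$, though it requires some bookkeeping to verify that continuous cocycles with values in $\Hom(\wh{L}, D)$ correspond exactly to continuous homomorphisms out of $H_{1}(\W, \wh{L})$; because $\wh{L}$ is discrete and finitely generated, these continuity issues are rather mild. Finally, the naturality of both isomorphisms ensures that the composite map is canonical.
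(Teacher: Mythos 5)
Your two-step plan follows the same skeleton as the paper's proof of this theorem: the paper first builds an abstract isomorphism $\Psi: H^{1}(\W,\wh{T}_D) \to \Hom(\Hom_{\G}(L,C_K),D)$ by combining universal coefficients (divisibility of $D$ kills the $\Ext$ term) with an identification $H_{1}(\W,\wh{L}) \cong H_{1}(C_K,\wh{L})^{\G} \cong \Hom_{\G}(L,C_K)$ built from the five-term LHS sequence, the transfer map, and Tate--Nakayama, and only then treats continuity as a separate matching condition. So in broad strokes you have the right picture. However, there are two places where your sketch glosses over genuine content.

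First, in Step~2 the five-term homology sequence gives you surjectivity onto $H_1(\G,\wh{L})$ essentially for free, but the isomorphism you want is onto $H_1(C_K,\wh{L})^{\G}$, and in particular injectivity of the relevant map (in the paper, the transfer $\tr$) is not a formal consequence of the five-term sequence. The paper proves it by dualizing: one shows that the kernel of $\Cor: H_1(C_K,\wh{L}) \to H_1(\W,\wh{L})$ equals the kernel of $N_{\G}$, which after applying $\Hom(-,\QQ/\ZZ)$ becomes the statement that a $\G$-invariant homomorphism $\psi: C_K \to \Hom(\wh{L},\QQ/\ZZ)$ extends to $\W$ exactly when the corresponding $\varphi: H_1(C_K,\wh{L}) \to \QQ/\ZZ$ vanishes on norm-zero elements; this in turn invokes the obstruction theory for extending homomorphisms along a group extension and the vanishing of $\psi_{*}([\u_{K/F}])$. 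That part of the argument does not appear in your plan.

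Second, "these continuity issues are rather mild" is where your plan really has a gap. The nontrivial continuity input is that $N_{\G}(\Hom(L,C_K))$ is an open subgroup of $\Hom_{\G}(L,C_K)$ (the paper's Proposition~\ref{p10}), which is what lets you transport continuity of a homomorphism across the norm map. Proving this uses the finiteness of $\wh{H}^{0}(\G,\Hom(L,C_K))$ (via Tate--Nakayama and finiteness of $\wh{H}^{*}(\G,\wh{L})$), compactness of the norm-one ideles $U_K$, and a case split according to whether $C_K/U_K$ is $\ZZ$ or $\RR$, with separate arguments for each. None of this follows from $\wh{L}$ being discrete and finitely generated. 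Relatedly, your proposal topologizes $H_1(\W,\wh{L})$ as a quotient of chains on $\W$, whereas the paper puts the topology on $H_1(C_K,\wh{L}) \cong \Hom(L,C_K)$ coming from $C_K$ and then transports it via $\tr$; if you use the quotient topology you still need to prove that it agrees with the one induced from $\Hom_{\G}(L,C_K)$, and that comparison is exactly where the openness of the norm image re-enters.
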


If $F$ is a global field, we say an element in $H_{cts}^{1}(W_F,\wh{T}_{D})$ is locally trivial if it restricts to zero in $H_{cts}^{1}(W_{F_v},\wh{T}_{D})$ for all places $v$ of $F$.

\begin{thm*}
	\begin{enumerate}[(a)]
		\item If $K$ is a local field, then  $H_{cts}^{1}(\W,\wh{T}_D)$ is isomorphic to $\Hom_{cts}(T(F),D)$.
		\item Let $D'$ be a divisible abelian topological group such that for any finite group $G$, $\Hom(G,D')$ is finite, and let $$\wh{T}_{D'}=\Hom(\L,D').$$ 
		If $K$ is a global field, then there is a canonical  surjective homomorphism $$\H_{cts}^{1}(\W,\wh{T}_{D'}) \lra \Hom_{cts}(T(\AA_F) /T(F) , D'),$$and  the kernel of this homomorphism is finite. Furthermore, if $D'$ is also Hausdorff, then the kernel consists of the locally trivial classes.

	\end{enumerate}

\end{thm*}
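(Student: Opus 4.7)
The plan is to deduce both parts from the main theorem, which identifies $H_{cts}^{1}(\W, \wh{T}_D)$ with $\Hom_{cts}(\Hom_{\Gal(K/F)}(L, C_K), D)$, by analyzing the intermediate group $\Hom_{\Gal(K/F)}(L, C_K)$ in each case. Part (a) is essentially immediate: when $K$ is local, $C_K = K^\times$ by definition, and since $T(K) = \Hom(L, K^\times)$ one has $\Hom_{\Gal(K/F)}(L, C_K) = T(K)^{\Gal(K/F)} = T(F)$, yielding the claimed isomorphism directly from the main theorem.

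For part (b), the crux is to relate $\Hom_{\Gal(K/F)}(L, C_K)$ to $T(\AA_F)/T(F)$ via the short exact sequence of $\Gal(K/F)$-modules $1 \to K^\times \to \AA_K^\times \to C_K \to 1$. Applying $\Hom(L, -)$, which is exact since $L$ is free, and then taking the long exact sequence of $\Gal(K/F)$-cohomology produces
\[ 0 \to T(F) \to T(\AA_F) \to \Hom_{\Gal(K/F)}(L, C_K) \to H^{1}(\Gal(K/F), T(K)), \]
so that $T(\AA_F)/T(F)$ injects into $\Hom_{\Gal(K/F)}(L, C_K)$ with cokernel $Q$ a subgroup of $H^{1}(\Gal(K/F), T(K))$. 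This latter group is finite by a standard Galois-cohomological computation for tori over global fields (reduction to cyclic subgroups via Shapiro's lemma, together with Hilbert~90 and the finiteness of class groups and of units modulo torsion), so $Q$ is finite. Dualizing the sequence $0 \to T(\AA_F)/T(F) \to \Hom_{\Gal(K/F)}(L, C_K) \to Q \to 0$ with $\Hom_{cts}(-, D')$, using divisibility of $D'$ to obtain surjectivity, together with the hypothesis that $\Hom(G, D')$ is finite for finite $G$, yields the claimed surjection with finite kernel $\Hom(Q, D')$.

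Finally, for local triviality under the Hausdorff assumption on $D'$: combining functoriality of the main theorem with part (a) applied to each local extension $K_w/F_v$ identifies the restriction $H_{cts}^{1}(\W, \wh{T}_{D'}) \to H_{cts}^{1}(W_{K_w/F_v}, \wh{T}_{D'}) \cong \Hom_{cts}(T(F_v), D')$ with $\phi \mapsto \phi|_{T(F_v)}$, where $T(F_v)$ sits inside $\Hom_{\Gal(K/F)}(L, C_K)$ through the map induced by $F_v^\times \hookrightarrow \AA_K^\times \to C_K$. Since the subgroup of $T(\AA_F)$ generated by the various $T(F_v)$ is dense, continuity together with the Hausdorff condition on $D'$ forces $\phi$ to be locally trivial precisely when it vanishes on the image of $T(\AA_F)/T(F)$, i.e., precisely when $\phi$ factors through $Q$. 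This matches the locally trivial classes with the kernel $\Hom(Q, D')$. The main obstacle I anticipate is this local-global compatibility of the main theorem's isomorphism, namely the verification that the restriction map at $v$ really does correspond, after dualization, to the natural inclusion $T(F_v) \hookrightarrow \Hom_{\Gal(K/F)}(L, C_K)$; a secondary but also nontrivial input is the finiteness of $H^{1}(\Gal(K/F), T(K))$.
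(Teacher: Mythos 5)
Part~(a) and the overall architecture of part~(b)---dualizing a short exact sequence $0 \to T(\AA_F)/T(F) \to \Hom_{\G}(L,C_K) \to Q \to 0$ with $Q$ finite, and then running a density argument for local triviality---match the paper's proof. However, your justification that $Q$ is finite contains a genuine gap. You argue that $Q$ is finite because it injects into $H^1(\G,T(K))$, which you claim is finite ``by a standard Galois-cohomological computation.'' This is false for global $K$. Take $F=\QQ$, $K=\QQ(i)$ and $T$ the norm-one torus, so $L=\ZZ$ with $\G=\ZZ/2$ acting by $-1$; then one computes $H^1(\G,T(K)) \cong \QQ^\times / N_{K/\QQ}(K^\times)$, and by the Hasse norm theorem no nonempty product of distinct primes $\equiv 3 \pmod 4$ is a norm from $\QQ(i)$, so this group is infinite. (Hilbert~90 gives vanishing only for the \emph{untwisted} $\G$-action on $K^\times$, and the suggested ``reduction to cyclic subgroups via Shapiro'' does not repair this.)

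The paper avoids the pitfall by proving finiteness of $Q$ directly, bypassing $H^1(\G,T(K))$ entirely: from the commutative square
$$\begin{tikzcd}
T(\AA_K) \arrow[r] \arrow[d,"N_{\G}"] & \Hom(L,C_K) \arrow[d,"N_{\G}"] \\
T(\AA_F) \arrow[r] & \Hom_{\G}(L,C_K)
\end{tikzcd}$$
the image of $T(\AA_F)$ in $\Hom_{\G}(L,C_K)$, namely $T(\AA_F)/T(F)$, contains $N_{\G}(\Hom(L,C_K))$, so $Q$ is a quotient of $\wh{H}^0(\G,\Hom(L,C_K))=\Hom_{\G}(L,C_K)/N_{\G}(\Hom(L,C_K))$, which is finite by Proposition~\ref{p11} (Tate--Nakayama applied to the idele class group $C_K$, together with finite generation of $\L$ as a $\ZZ[\G]$-module). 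This Tate--Nakayama input is precisely what your sketch is missing and cannot be replaced by finiteness of $H^1(\G,T(K))$. You are right to flag the local--global compatibility of the duality isomorphism as a point needing care; the paper handles the passage from $\W$ to $W_F$ via the inflation bijection of Proposition~\ref{p16} together with Proposition~\ref{weill}, and you would need the same device.
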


A particular case of interest is when we set $D=\CC_p^{\times}$. It is well-known that, as fields, $\CC$ and $\CC_p$ are isomorphic, but not as topological fields. We will see that the topology on $D$ is irrelevant for the first theorem and is only needed for the last part of the second theorem, so since $\CC_p^{\times}$ is a divisible abelian topological group and  for any $n \in \ZZ_{>0}$ we have that the number of elements of order dividing $n$ is finite, which means $\Hom(G,\CC_p^{\times})$ will be a finite group for any finite group $G$, furthermore it is Hausdorff since it is a metric space. So we have that both theorems apply, from which we can then deduce the abelian case of the $p$-adic Langlands program.
In general, we can let $D=A^{\times}$, where $A$ is any Hausdorff topological field, like for example $\overline{\mathbb{F}}_{p},\overline{\QQ}_{p}$ or $\CC$. The case $D=\CC^{\times}$ was what was originally proved by Langlands in \cite{lang}.

The paper is split into three sections. In Section $1$ we setup some notation and recall some basic facts about Weil groups. Section $2$ contains the main technical results (which are largely, calculations in homological algebra), culminating in the proof of the first stated theorem in the introduction. Finally, in Section 3, we show how to use the results from Section $2$ prove second stated theorem.

\subsection*{Acknowledgements}I wish to thank Kevin Buzzard for suggesting this problem and encouraging me to write this paper. This work was part of the authors masters thesis under his supervision.

\section{\bf Setup and notation}

In what follows we will do many calculations involving cohomology classes in homology, cohomology and Tate cohomology groups. For this we well use the following notation:

\begin{nota}
	\begin{enumerate}
		\item Throughout, we will denote $\Gal(K/F)$ simply by $\G$.
		\item If $G$ is any group and $A$ is a $G$-module, then for $x \in Z_{n}(G,A)$ (a $n$-cocyle) we let $[x]$ represent its class in $H_{n}(G,A)$, and we do the same for cohomology and Tate cohomology groups. 
		\item Throughout, we will denote Tate cohomology group by $\wh{H}^i$. We recall that for $i\geq 1$, $\wh{H}^i=H^i$ and for $i < -1$, $\wh{H}^i=H_i$. So they only differ in degree $-1$ and $0$, where, for $G$ a finite group and $A$ a $G$-module, we have $\H^0(G,A)=A^G/N_G(A)$ and $\H^{-1}(G,A)=\ker(N_G)/I_{G}A$ where $N_G$ is the norm map\footnote{This sends $a$ to $\sum_{g\in G} ga.$} and $I_G$ is the augmentation ideal. Lastly, we let $A_{G}=A/I_{G}$ denote the coinvariants.
		
	\end{enumerate}\label{not1}
\end{nota}

\begin{rmrk}
	Note that from the action of $\G$  on $L$, we can define a $\G$-action on $\L$ by setting $(g\l)(x)=\l(g^{-1} \cdot x),$  for $\l \in \L$, $x \in L$ and $g \in \G$. Similarly, we define the action of $\G$ on $\wh{T}_D$  as $(g \a)(\l)=\a(g^{-1} \cdot \l)$ where $\a \in \wh{T}_D$, $\l \in \L$ and $g \in \G$.
\end{rmrk}

For our purposes we will use the following simple description of the Weil group $\W$.
\begin{prop}\label{wex}
	
	The Weil group $\W$ fits in an exact sequence $$0 \lra C_K \lra \W \overset{\s}\lra \G \lra 0,$$  corresponding to the fundamental class $[\u]$ in $H^{2}(G,C_K)$.
	
\end{prop}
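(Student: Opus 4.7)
The plan is to unwind Tate's axiomatic definition of the Weil group from \cite{tatewg}; the proposition is essentially a restatement of that definition, so the ``proof'' amounts to extracting the right data from the reference.

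First I would recall Tate's Definition 1.1. In \cite{tatewg}, $W_{K/F}$ is constructed (or axiomatized) as a topological group equipped with a continuous surjection $\sigma \colon W_{K/F} \to \G$ and a continuous injection $C_K \hookrightarrow W_{K/F}$ whose image is exactly $\ker\sigma$. This immediately yields the short exact sequence in the statement; what remains is to identify the corresponding extension class in $H^{2}(\G,C_K)$.

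Second, I would spell out the standard dictionary between group extensions and $H^{2}$. Given any extension $0 \to C_K \to E \overset{\sigma}\to \G \to 0$ with abelian kernel, one picks a set-theoretic section $s\colon \G \to E$ of $\sigma$ and defines $c(g,h) = s(g)s(h)s(gh)^{-1}\in C_K$; this is a $2$-cocycle whose cohomology class is independent of the choice of $s$ and classifies the extension up to equivalence. Applied to $\W$, this produces a class in $H^{2}(\G,C_K)$, and I would have to check that it is the fundamental class $[\u]$. This is one of the defining axioms of the Weil group in \cite{tatewg}: the extension class is required to agree with the fundamental class of local (resp.\ global) class field theory, and compatibility with the reciprocity isomorphism on $H^{-2}$ (via cup product with $[\u]$) encodes this requirement.

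The main obstacle, in principle, is verifying that Tate's axioms really do pin down the extension class as $[\u]$; but inside Tate's framework this is built in, and all of the genuinely hard content (existence and uniqueness of the fundamental class, local--global compatibility) lives in the cited paper rather than here. Consequently I would keep the proof short: cite Definition 1.1 of \cite{tatewg} for the sequence, and cite the axiom identifying the extension class for the second assertion.
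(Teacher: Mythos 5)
Your proposal is correct and matches the paper exactly: the paper's proof is simply the citation to \cite[(1.2)]{tatewg}, and your unpacking of Tate's axiomatic definition (the short exact sequence together with the axiom identifying the extension class with the fundamental class) is precisely what that reference contains. There is no additional content to supply beyond the citation.
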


\begin{proof} See \cite[(1.2)]{tatewg}. \end{proof}

\begin{nota}\label{ckreps}
	We fix once and for all $\{ w_g \mid g \in \G \}$ to be the set of left coset representatives of $C_K$ in $\W$.
	
\end{nota}
\begin{prop}\label{weill}
	Let $F$ be a global field and for each place $v$ of $F$ let $F_v$ denote the completion at $v$. Furthermore, let $W_F$ (resp. $W_{F_v}$) denote $W_{\overline{F}/F}$  (resp. $W_{\overline{F}_v/F_v}$ ) where $\overline{F}$ and $\overline{F}_v$ are fixed separable algebraic closures. Then we have a commutative diagram: 
	
	\[
	\xymatrix@C=4em@R=3em{
		W_{F_v}  \ar[r]^{\s_v} \ar[d] & \Gal(\overline{F}_{v}/F_v) \ar[d] \\
		W_F  \ar[r]^{\s} & \Gal(\overline{F}/F) \\
	}\]
	
\end{prop}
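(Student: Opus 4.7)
The plan is to deduce this from the functoriality of Tate's construction of the relative Weil group with respect to the pair $(K,F) \rightsquigarrow (K_w, F_v)$, where $w$ is a fixed place of $K$ above $v$. Recall from \cite{tatewg} that for any finite Galois extension $K/F$, the Weil group $W_{K/F}$ is the extension of $\G=\Gal(K/F)$ by $C_K$ classified by the fundamental class in $H^{2}(\G,C_K)$, and $W_F$ is defined as the inverse limit (in the appropriate sense) of the $W_{K/F}$ over finite Galois $K/F$. So the work is to exhibit, at each finite level, a map of extensions compatible with the projection $\sigma$ onto the Galois group.

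First I would fix an embedding $\overline{F}\hookrightarrow \overline{F}_v$ extending $F\hookrightarrow F_v$; this selects a place $\overline{v}$ of $\overline{F}$ above $v$ and produces the decomposition map $\Gal(\overline{F}_v/F_v)\to \Gal(\overline{F}/F)$ which will be the right-hand vertical arrow. For any finite Galois $K/F$ inside $\overline{F}$, write $K_w$ for its completion at $w=\overline{v}|_K$; then $K_w/F_v$ is Galois and $\Gal(K_w/F_v)$ embeds into $\G$ as the decomposition subgroup at $w$. The standard inclusion of local into global objects gives a $\Gal(K_w/F_v)$-equivariant homomorphism $K_w^\times \to C_K$ (send $x$ to the class of the idele equal to $x$ at $w$ and to $1$ elsewhere), and a classical compatibility of fundamental classes states that the pullback of Tate's global fundamental class under $\Gal(K_w/F_v)\hookrightarrow \G$ is the corresponding local fundamental class, modulo the above map of coefficients. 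This is exactly the input Tate's construction needs to produce a homomorphism of extensions
\[
\begin{CD}
0 @>>> K_w^\times @>>> W_{K_w/F_v} @>{\sigma_{K_w/F_v}}>> \Gal(K_w/F_v) @>>> 0 \\
@. @VVV @VVV @VVV @. \\
0 @>>> C_K @>>> W_{K/F} @>{\sigma_{K/F}}>> \Gal(K/F) @>>> 0,
\end{CD}
\]
and commutativity of the right-hand square is precisely the finite-level version of the diagram in the proposition.

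Passing to the limit over finite Galois $K/F$ (using that the transition maps in Tate's system are compatible with the analogous transition maps on the local side, which again is a formal consequence of the compatibility of fundamental classes) yields the vertical arrow $W_{F_v}\to W_F$ and the outer commutative square as stated. The main obstacle is the compatibility of local and global fundamental classes under $K_w^\times \to C_K$, but this is a standard result in class field theory (see \cite[\S 1]{tatewg}), so once it is invoked the rest of the argument is purely formal manipulation of extensions. I would therefore state the proof as an appeal to \cite{tatewg}, just recording the construction of the vertical map in the diagram for later use in Section 3.
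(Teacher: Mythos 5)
Your proposal is correct and, as you acknowledge in your closing sentence, ultimately reduces to the same reference the paper itself uses: the paper's proof is simply a citation of \cite[Proposition 1.6.1]{tatewg}. Your sketch is a faithful unpacking of what lies behind that citation (compatibility of local and global fundamental classes under $K_w^\times \to C_K$, the resulting morphism of extensions at each finite level, and passage to the inverse limit), but it is the same route, not a different one.
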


\begin{proof}
	See \cite[Proposition 1.6.1]{tatewg}.
\end{proof}

In what follows we will be concerned with representations of $\W$ into the group $${}^LT_{D}=\wh{T}_D \rtimes \G$$  (when $D=\CC^{\times}$, this group is known as the $L$-group of $T$). We want to study continuous homomorphisms $$\phi : \W \lra \wh{T}_D \rtimes \G$$ that make
$$
\xymatrix@C=2em@R=3em{
	\W \ar[r]^{\s} \ar[d]^{\phi} & \G \ar@{=}[d] \\
	\wh{T}_D \rtimes \G \ar[r] & \G \\
}$$
a commutative diagram; these are called admissible homomorphisms. Two admissible homomorphisms $\a,\beta$ from $\W$ to ${}^LT_{D}$ are called equivalent if there exists $t \in \wh{T}_D$ such that $\a = t \beta t^{-1}$. Now, note that we can write $\phi=f \times \s$, where $f \in Z_{cts}^{1}(\W,\wh{T}_D)$, from which it follows that two admissible homomorphisms $\a=f_{a} \times \s$ and $\beta=f_{\beta} \times \s$ from $\W$ to ${}^LT_{D}$ are equivalent if and only if $f_{\a}$ and $f_{\beta}$ represent the same cohomology class of $H_{cts}^{1}(\W,\wh{T}_D)$.

\section{\bf The duality theorem}
In this section we will prove the following:

\begin{thm}\label{t1} There is a canonical isomorphism  \[H_{cts}^{1}(\W,\wh{T}_D) \overset{\sim}{\lra}\Hom_{cts}(\Hom_{\Gal(K/F)}(L,C_K),D).\]
\end{thm}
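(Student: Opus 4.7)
The plan is to apply the Hochschild-Serre five-term exact sequence to the extension
\[ 0 \to C_K \to \W \to \G \to 0 \]
of Proposition \ref{wex}, and match the resulting four-term exact sequence against a parallel four-term sequence obtained by dualizing the Tate norm map on $\Hom(L,C_K)$. Since $C_K$ acts trivially on $\wh{T}_D$ (the $\W$-action factors through $\G$), Hochschild-Serre yields
\[
0 \to H^1(\G,\wh{T}_D) \to H^1_{cts}(\W,\wh{T}_D) \overset{\mathrm{res}}{\to} \Hom_{cts}(C_K,\wh{T}_D)^{\G} \overset{\mathrm{trg}}{\to} H^2(\G,\wh{T}_D),
\]
in which the transgression $\mathrm{trg}$ is cup product with the fundamental class $[\u]$.

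I would identify each of these four terms with three ingredients. First, because $\L$ is a finitely generated free $\ZZ$-module, the tensor-hom adjunction gives a $\G$-equivariant isomorphism $\Hom_{cts}(C_K,\wh{T}_D) \cong \Hom_{cts}(\Hom(L,C_K),D)$; its $\G$-invariants equal $\Hom_{cts}(\Hom(L,C_K)_{\G},D)$, since $\G$ acts trivially on $D$. Second, divisibility of $D$ makes $\Hom(-,D)$ exact on $\ZZ$-modules, and applied to a free resolution computing group homology this yields $H^i(\G,\wh{T}_D) \cong \Hom(H_i(\G,\L),D)$ for $i\ge 1$. Third, Tate-Nakayama duality (cup product with $[\u]$) identifies $H_i(\G,\L) \cong \widehat{H}^{1-i}(\G,\Hom(L,C_K))$ for $i=1,2$. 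The parallel sequence is obtained by dualizing the four-term norm sequence
\[
0 \to \widehat{H}^{-1}(\G,M) \to M_{\G} \overset{N_{\G}}{\to} M^{\G} \to \widehat{H}^0(\G,M) \to 0
\]
for $M=\Hom(L,C_K)$: applying $\Hom_{cts}(-,D)$ (exact by $\ZZ$-injectivity of $D$) and the identifications above produces an exact sequence with outer terms $H^1(\G,\wh{T}_D)$ and $H^2(\G,\wh{T}_D)$, right middle term $\Hom_{cts}(\Hom(L,C_K)_{\G},D)$ matching the Hochschild-Serre sequence, and left middle term equal to the target $\Hom_{cts}(\Hom_{\G}(L,C_K),D)$.

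The delicate remaining step, which I expect to be the main obstacle, is verifying the compatibility $\mathrm{trg}=N^{\ast}$ under these identifications. Both maps are ultimately built from the fundamental class: the transgression is cup product with $[\u]$ by a standard argument, while $N^{\ast}$ inherits a role of $[\u]$ through the Tate-Nakayama isomorphisms that identify the outer terms. The verification should reduce to an explicit cochain-level calculation on the bar complex, relating $\psi \mapsto \psi\circ\u$ to the Tate norm via the Tate-Nakayama formula. Granted this compatibility (together with the analogous check that inflation matches the dualized surjection $\Hom_\G(L,C_K)\twoheadrightarrow\widehat{H}^0(\G,\Hom(L,C_K))$), the five lemma applied to the two four-term exact sequences gives the canonical isomorphism
\[
H^1_{cts}(\W,\wh{T}_D) \overset{\sim}{\lra} \Hom_{cts}(\Hom_{\G}(L,C_K),D).
\]
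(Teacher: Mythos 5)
Your outline is, in a real sense, the $\Hom(-,D)$-dual of the argument in the paper: diagram $(\mathbf{A})$ there has the norm sequence as its bottom row and, as its top row, the sequence whose $D$-dual is exactly your Hochschild--Serre five-term sequence (via the Universal Coefficients isomorphism of Proposition~\ref{p3}). The identifications you list — tensor-hom on the middle term, UCT on the outer terms, Tate--Nakayama on the degree-shifted Tate groups, and $\Hom_{cts}(\Hom(L,C_K),D)^{\G}=\Hom_{cts}(\Hom(L,C_K)_\G,D)$ since $D$ is a trivial module — are all correct and match the paper's bookkeeping.

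However, the final step does not go through as written. Having two exact sequences
\[
0 \to A \to B_1 \to C \to E \qquad\text{and}\qquad 0 \to A \to B_2 \to C \to E
\]
with identified $A$, $C$, $E$ and with the maps $C\to E$, $A\to B_i$ compatible does \emph{not} produce a canonical isomorphism $B_1\cong B_2$, and the five lemma cannot be invoked: the five lemma presupposes a vertical map $B_1\to B_2$ making the ladder commute, and that map is precisely what needs to be constructed. Concretely, such data only say that $B_1$ and $B_2$ are both extensions of $\ker(C\to E)$ by $A$; without identifying the extension classes (or producing a map), there is no isomorphism (compare $\ZZ/4$ versus $\ZZ/2\oplus\ZZ/2$ as extensions of $\ZZ/2$ by $\ZZ/2$). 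The missing ingredient is the content of Proposition~\ref{t1p1}: the paper builds a specific map, namely the transfer $\tr_1\colon H_1(\W,\L)\to H_1(C_K,\L)^{\G}$, proves it is an isomorphism by fitting it into the commutative diagram $(\mathbf{A})$ (surjectivity, Propositions~\ref{p5}--\ref{p6}) and a corestriction/norm-kernel comparison (injectivity), and then dualizes via UCT. The transfer on homology is dual to corestriction on cohomology, so its dual is the vertical map that would let your five lemma argument close; but that map, and the diagram chase verifying it is compatible with inflation, restriction and transgression, is the entire substance of the proof rather than a granted compatibility. A secondary but nontrivial point your outline elides: the Hochschild--Serre sequence is written with $H^1_{cts}(\W,-)$ on the inside but ordinary $H^i(\G,-)$ and an ordinary transgression on the outside; the paper avoids this by first proving the purely algebraic isomorphism and only afterwards (Proposition~\ref{continu}, which itself requires showing $N_\G(\Hom(L,C_K))$ is open in $\Hom_\G(L,C_K)$) matching up the continuous structures on both sides.
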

We begin by proving that there exists an isomorphism $$\Psi: H^{1}(\W,\wh{T}_D) \lra \Hom(\Hom_{\Gal(K/F)}(L,C_K),D),$$ (this will follow from \ref{n1} and  Proposition \ref{t1p1}) and then we prove that $\Psi([f])$ is a continuous homomorphism if and only if $f \in Z_{cts}^{1}(\W,\wh{T}_D)$ (this is Proposition \ref{continu}).

In what follows we extend the natural action of $\G$ on $C_K$, to that of $\W$ on $C_K$, by letting $\W$ act by conjugation. Since $C_K$ is an abelian normal  subgroup of $\W$, we see that $C_K$ will act trivially on itself and hence we get an induced $\G$-action, which agrees with the standard Galois action of $\G$ on $C_K$. Also all $\G$-modules can be viewed as $\W$ modules, and therefore can also  be viewed  as $C_K$-modules, where $C_K$ will act trivially.

\begin{rmrk} Throughout we will be proving results about (co)homology groups and in the proofs we will always work with $n$-(co)cycles and usually ignore $n$-(co)boundaries, since in all of these cases the maps involved are maps between (co)homology groups which will automatically send (co)boundaries to (co)boundaries, so all that we need to check is how the maps in question act on the $n$-(co)cycles.
\end{rmrk}

\begin{prop}\label{p2} There is a natural $\G$-isomorphism of $H_{1}(C_K,\wh{L})$ with $\Hom(L,C_K).$
\end{prop}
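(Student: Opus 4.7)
The plan is to exploit that $C_K$ is abelian and acts trivially on $\L$ (any $\G$-module is a $\W$-module on which the normal abelian subgroup $C_K$ acts trivially via conjugation), together with the fact that $\L$ is $\ZZ$-free of finite rank since $L\cong\ZZ^m$. I will compute $H_1(C_K,\L)$ directly, identify it with $C_K\otimes_\ZZ\L$, and then use the evaluation pairing to reach $\Hom(L,C_K)$.

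First I read off $H_1(C_K,\L)$ from the normalised bar resolution. Because $C_K$ acts trivially, every $1$-chain $(c)\otimes\l$ is a cycle, and the $2$-boundaries impose the relation $(c_1c_2)\otimes\l \,\sim\, (c_1)\otimes\l + (c_2)\otimes\l$. Hence
\[
H_1(C_K,\L)\;\cong\; C_K^{\ab}\otimes_\ZZ\L \;=\; C_K\otimes_\ZZ\L,
\]
the last equality using that $C_K$ is abelian. (Equivalently, this is the universal coefficient formula, whose $\Tor$ term vanishes since $\L$ is torsion-free.)

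Next, since $L$ is finitely generated free, the evaluation pairing $\L\times L\to\ZZ$ induces a natural map
\[
\Phi:\; C_K\otimes_\ZZ\L \lra \Hom(L,C_K),\qquad c\otimes\l \longmapsto \bigl(x\mapsto c^{\l(x)}\bigr),
\]
which is an isomorphism: choosing a $\ZZ$-basis $e_1,\dots,e_m$ of $L$ and its dual basis of $\L$ identifies both sides with $C_K^m$ and $\Phi$ with the identity. Composing the two isomorphisms gives the desired map $H_1(C_K,\L)\overset{\sim}{\to}\Hom(L,C_K)$.

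Finally I check $\G$-equivariance. The group $\G$ acts on $C_K$ naturally (matching conjugation via $\W$), on $\L$ as in the remark preceding Proposition~\ref{wex}, and on $\Hom(L,C_K)$ by $(g\phi)(x)=g\cdot\phi(g^{-1}x)$. On a simple tensor the composite sends
\[
g\cdot(c\otimes\l)\;=\;gc\otimes g\l\;\longmapsto\;\bigl(x\mapsto (gc)^{(g\l)(x)}\bigr)\;=\;\bigl(x\mapsto g\bigl(c^{\l(g^{-1}x)}\bigr)\bigr),
\]
which is exactly $g\cdot\Phi(c\otimes\l)$. The only real obstacle is the bookkeeping for the $\G$-actions: once one has written down how $\G$ acts on $C_K\otimes_\ZZ\L$ (diagonally) and on $\Hom(L,C_K)$ (as above), the equivariance of $\Phi$ is a one-line check, and the equivariance of the bar-resolution isomorphism follows from functoriality in the coefficients and in the group.
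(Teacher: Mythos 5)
Your argument is correct and follows the same route as the paper: use triviality of the $C_K$-action on $\L$ to identify $H_1(C_K,\L)$ with $C_K^{\ab}\otimes_\ZZ\L = C_K\otimes_\ZZ\L$, then use the evaluation pairing to identify $C_K\otimes_\ZZ\L$ with $\Hom(L,C_K)$, checking $\G$-equivariance at each step. You spell out the bar-resolution computation and the equivariance check a bit more explicitly than the paper (which cites Weibel for the first step), but the content is the same.
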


\begin{proof}
	Since $C_K$ acts trivially on $\wh{L}$ then  $$H_1(C_K,\wh{L}) \cong C_K \otimes_{\ZZ} \wh{L}$$ since if a group $X$ acts trivially on a $X$-module $A$, then $H_{1}(X,A) \cong X/[X,X] \otimes_{\ZZ} A$ where $[X,X]$ denotes the commutator subgroup (see \cite[p.\ 164]{weib} ). So in this case, since $C_K$ is an abelian group we get the result above and note this will be a $\G$-isomorphism\footnote{Here $\G$ acts on 1-cycles  $x \in Z_1(C_K,\L)$ as $g \cdot x(a)= g x(g^{-1} \cdot a )$ for all $a \in C_K$, and $\G$ acts diagonally on $C_K \otimes \L$.}. Furthermore, we have a natural $\G$-isomorphism $C_K\otimes_{\ZZ} \L \to\Hom(L,C_K)$ where for $\wh{\l} \in \wh{L}$ and $a \in C_K$ we send  $a \otimes \wh{\l}$ to the homomorphism $\l \to a^{\langle \l, \wh{\l} \rangle}$ where $\l \in L$ and $\langle - , - \rangle$ is the natural bilinear paring $\langle - , - \rangle : L \times \wh{L} \to \ZZ$, and this is a $\G$-isomorphism.
	Combining these two isomorphisms, we get a $\G$-isomorphism 
	$$
	H_{1}(C_K,\wh{L})  \lra \Hom(L,C_K)$$
\end{proof}
Under this isomorphism we see that a 1-cycle  $x \in Z_{1}(C_K,\L)$, will map to the homomorphism 
$$ \lambda  \longmapsto \prod_{a \in C_K} a^{\langle \lambda, x(a) \rangle}, \qquad \text{for } \lambda \in L.
$$Note the homomorphism makes sense as the support of the 1-cycles and 1-boundaries is always finite.

Now, from the Universal Coefficients Theorem we have the following:

\begin{prop}\label{p3} Let $G$ be any group, and let $D$ be a divisible abelian group with trivial $G$-action. Then for all $n > 0$ we have an isomorphism $H^{n}(G,\Hom(B,D)) \to \Hom(H_{n}(G,B),D)$, for any left $G$-module $B$.
\end{prop}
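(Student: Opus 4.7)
The plan is to deduce the statement from the standard Hom–tensor adjunction together with the exactness of $\Hom_\ZZ(-,D)$, which holds because a divisible abelian group is an injective $\ZZ$-module. The point is that once one sets things up on a projective resolution, the universal coefficient short exact sequence degenerates since its $\Ext^1(-,D)$ term vanishes.

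First I would fix a projective resolution $P_\bullet \to \ZZ$ of the trivial $\ZZ G$-module $\ZZ$, so that by definition
$$H_n(G,B) = H_n\bigl(P_\bullet \otimes_{\ZZ G} B\bigr), \qquad H^n\bigl(G,\Hom(B,D)\bigr) = H^n\bigl(\Hom_{\ZZ G}(P_\bullet,\Hom(B,D))\bigr).$$
The main structural input is the Hom–tensor adjunction applied degree-wise: since $D$ has trivial $G$-action, giving a $\ZZ G$-linear map $P_n \to \Hom_\ZZ(B,D)$ is the same as giving a $\ZZ$-linear map $P_n \otimes_\ZZ B \to D$ that factors through the coinvariants of the diagonal $G$-action, i.e.\ a $\ZZ$-linear map $P_n \otimes_{\ZZ G} B \to D$. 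One then checks this natural isomorphism is compatible with the differentials induced by $P_\bullet$, so it is an isomorphism of cochain complexes
$$\Hom_{\ZZ G}\bigl(P_\bullet,\Hom_\ZZ(B,D)\bigr) \;\cong\; \Hom_\ZZ\bigl(P_\bullet \otimes_{\ZZ G} B,\; D\bigr).$$

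For the final step I would invoke the fact that, because $D$ is divisible, $\Hom_\ZZ(-,D)$ is an exact functor on abelian groups. A direct argument (lift cycles to cocycles using injectivity, and show every coboundary extends) shows that for any chain complex $C_\bullet$ of abelian groups there is a natural isomorphism
$$H^n\bigl(\Hom_\ZZ(C_\bullet, D)\bigr) \xrightarrow{\sim} \Hom_\ZZ\bigl(H_n(C_\bullet),\, D\bigr),$$
equivalently the $\Ext^1$ piece in the universal coefficient sequence is zero. Applied to $C_\bullet = P_\bullet \otimes_{\ZZ G} B$ and combined with the adjunction above, this gives the desired isomorphism
$$H^n\bigl(G,\Hom(B,D)\bigr) \xrightarrow{\sim} \Hom\bigl(H_n(G,B),\, D\bigr).$$

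The only real obstacle is bookkeeping around the $G$-actions: one has to track that the left $G$-action on $\Hom(B,D)$ is indeed the one coming from $B$ alone (because $D$ is trivial), so that the adjunction lands in ordinary $\ZZ$-linear maps out of $P_n \otimes_{\ZZ G} B$ rather than in some invariants or coinvariants of a diagonal action. Everything else is a formal consequence of injectivity of $D$ and the definitions of group (co)homology in terms of a projective resolution.
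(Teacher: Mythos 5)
Your proposal is correct and is exactly the standard argument: the paper itself simply cites \cite[Corollary 7.61]{rotmanhom}, and the proof of that corollary is precisely the Hom--tensor adjunction $\Hom_{\ZZ G}(P_\bullet,\Hom_\ZZ(B,D))\cong\Hom_\ZZ(P_\bullet\otimes_{\ZZ G}B,D)$ followed by exactness of $\Hom_\ZZ(-,D)$ for injective $D$. One small point worth flagging: the phrase ``the universal coefficient sequence degenerates'' is a slight shortcut, since the textbook UCT assumes $P_\bullet\otimes_{\ZZ G}B$ is $\ZZ$-flat in each degree (which need not hold here), but you correctly sidestep this by giving the direct argument that $H^n(\Hom_\ZZ(C_\bullet,D))\cong\Hom_\ZZ(H_n(C_\bullet),D)$ for an arbitrary chain complex $C_\bullet$ whenever $D$ is injective, so there is no gap.
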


\begin{proof}This follows from \cite[Corollary 7.61]{rotmanhom}.
	%It is a standard result in homological algebra that for any two rings $R,S$ and any right $R,S$ modules $A, C$, respectively, with $B$ a $R-S$-bimodule, we have a natural isomorphism $$\Hom_{S}(A \otimes_{R} B,C) \overset{\sim}\lra \Hom_{R}(A,\Hom_{S}(B,C))$$(see \cite[Theorem 2.75]{rotmanhom} ). So if we let $S=\ZZ$, $R=\ZZ[G]$ and $C=D$, then we get $$\Hom_{\ZZ}(A \otimes_{\ZZ[G]} B,D) \cong \Hom_{G}(A,\Hom_{\ZZ}(B,D)).$$ Now we are going to replace $A$ by a projective resolution $X^{\bullet}$ of $\ZZ$ by $G$-modules. In this case we can consider the $X^{i}$ as both right or left modules by using the usual trick of introducing inverses. So the above gives rise to isomorphic complexes $$\Hom_{\ZZ}(X^{\bullet} \otimes_{\ZZ[G]} B ,D) \cong \Hom_{G}(X^{\bullet},\Hom_{\ZZ}(B,D)).$$ Now we take homology of both sides. Since $\Hom_{\ZZ}(-,D)$ is an exact contravariant functor (as $D$ is $\ZZ$-injective), the homology in the left  is $\Hom_{\ZZ}(H_{n}(G,B),D)$. For the complex on the right taking homology, we get  $H^{n}(G,\Hom_{\ZZ}(B,D))$ and since the complexes are isomorphic, the homology groups will be isomorphic.
\end{proof}

\begin{rmrk}\label{rmkiso} For $n=1$, the isomorphism from Proposition \ref{p3} can be seen to be induced by the  paring \[H^{1}(G,\Hom(B,D)) \times H_{1}(G,B)  \lra D ,\] which sends a $1$-cocycle $f$ and a $1$-cycle $x$ to
	$ \sum_{g \in G} \langle f(g) , x(g) \rangle.$\footnote{Here  by $\langle f(g) , x(g) \rangle$ we mean $f(g)(x(g))$.} 
	
\end{rmrk} 
\begin{num}\label{n1}
	We can now use this result to reduce the task of finding an isomorphism  \[\Psi: H^{1}(\W,\wh{T}_D) \lra \Hom(\Hom_{\Gal(K/F)}(L,C_K),D),\]   to finding an isomorphism  \[H_{1}(\W,\wh{L}) \lra H_{1}(C_K,\wh{L})^{\G},\] since once we have this, setting $n=1$, $B=\L$, and $G=\W$ in Proposition \ref{p3} and using Proposition \ref{p2} gives $\Psi$. To find this isomorphism we use the fact  $C_K$ is a normal subgroup of $\W$ and of finite index $|\G|$ together with the following:
\end{num}
\begin{prop}\label{Trans} Let $G$ be any group, and let $H$ be a subgroup of $G$ of finite index with $\{g_{i} \}$ denoting left coset representatives of $H$ in $G$. Then for $n \geq 0 $ and any $G$-module $A$, there exists unique homomorphisms $\ttr_n : H_{n}(G,A) \to H_{n}(H,A),$ such that:
	
	\begin{enumerate}
		\item For $n=0$, and all $a \in A$, we have $\ttr_{0}(\overline{a}) = \sum_{i} \overline{g_{i}a},$ where on the left $\overline{a}$ denotes the image of\/ $a$ in $A_G$ and on the right $\overline{g_{i}a}$ denotes the image of $g_{i}a$ in $A_H$. 
		
		\item If\/  $0 \overset{i}\lra A \lra B \overset{p}\lra C \lra 0$ is an exact sequence of $G$-modules, then there is a commutative diagram

		\adjustbox{scale=0.9, center} {	
			\xymatrix@C=4em@R=4em{
				H_{n}(G,B) \ar[r]^{p'}\ar[d]^{\ttr_{n}}  & H_{n}(G,C) \ar[r]^{\delta} \ar[d]^{\ttr_{n}} & H_{n-1}(G,A) \ar[r]^{i'} \ar[d]^{\ttr_{n-1}} & H_{n-1}(G,B) \ar[d]^{\ttr_{n-1}} \\
				H_{n}(H,B) \ar[r]^{p'}  & H_{n}(H,C) \ar[r]^{\delta} & H_{n-1}(H,A) \ar[r]^{i'} & H_{n-1}(H,B) 
			}
			
		}
		
	\end{enumerate}
\end{prop}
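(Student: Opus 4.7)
The plan is to construct the transfer maps $\ttr_n$ by induction on $n$, following the standard dimension-shifting argument used to define corestriction in group homology; uniqueness and existence will be handled in parallel, since the compatibility property (ii) forces the inductive step.

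For $n = 0$, the formula in (i) gives an explicit candidate which I would first verify to be a well-defined map $A_G \to A_H$: both independence of the choice of coset representatives and well-definedness modulo $I_G A$ amount to rearranging the sum $\sum_i \overline{g_i a}$ using that right multiplication of the $g_i$ by elements of $H$, and left multiplication by elements of $G$, permute the cosets up to compensating terms that lie in $I_H A$. Uniqueness in degree $0$ is then built into the statement.

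For $n \geq 1$, I would embed $A$ into a short exact sequence $0 \to A' \to M \to A \to 0$ with $M$ a free $\ZZ[G]$-module (say $M = \ZZ[G] \otimes_\ZZ A$, surjecting via $g \otimes a \mapsto ga$). Since $\ZZ[G]$ is free of rank $[G:H]$ over $\ZZ[H]$, the module $M$ is also $H$-free, so $H_n(G,M) = H_n(H,M) = 0$ for all $n \geq 1$. The long exact sequences in homology for $G$ and $H$ produce connecting maps
\[
\delta_G \colon H_n(G, A) \lra H_{n-1}(G, A'), \qquad \delta_H \colon H_n(H, A) \lra H_{n-1}(H, A'),
\]
which are isomorphisms for $n \geq 2$ and, for $n = 1$, injective with image equal to the kernel of $H_0(-, A') \to H_0(-, M)$. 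The compatibility property (ii) applied to this short exact sequence forces $\delta_H \circ \ttr_n = \ttr_{n-1} \circ \delta_G$, which uniquely determines $\ttr_n$ from $\ttr_{n-1}$ for $n \geq 2$. For $n = 1$ the same equation determines $\ttr_1$, provided one checks that $\ttr_0(\delta_G[x]) \in \im(\delta_H)$; this follows from naturality of the already-constructed $\ttr_0$ with respect to $A' \hookrightarrow M$, together with the fact that $\delta_G[x]$ already maps to zero in $H_0(G, M)$ by exactness, so its image under $\ttr_0$ maps to zero in $H_0(H, M)$.

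Existence is then obtained by taking this forced formula as the definition of $\ttr_n$. The main remaining work is to verify that the result is independent of the chosen free cover $M$ (standard: compare two such covers via a third, using uniqueness up to chain homotopy for lifts of the identity on $A$) and that property (ii) holds for an \emph{arbitrary} short exact sequence $0 \to A \to B \to C \to 0$, not merely the one used in the construction. The latter is carried out by mapping the given sequence into a short exact sequence of free covers, then running a diagram chase that combines naturality of the connecting homomorphism with the inductive hypothesis. Neither step requires new ideas; the main obstacle is precisely this verification of naturality across all short exact sequences, together with the careful bookkeeping in the $n = 1$ case where $\delta_H$ is only injective rather than an isomorphism.
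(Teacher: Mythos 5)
The proposal correctly sketches the standard dimension-shifting construction of transfer in group homology: verify the explicit formula in degree $0$, then bootstrap via a short exact sequence $0 \to A' \to M \to A \to 0$ with $M$ induced (hence acyclic for both $G$ and its finite-index subgroup $H$), using injectivity of $\delta_H$ and naturality of $\ttr_0$ to handle the $n=1$ case. The paper gives no argument of its own here and simply cites Rotman, Proposition~9.93, whose proof is this same dimension-shifting argument, so the proposal and the paper's source take essentially the same route.
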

\begin{proof}
	See \cite[Proposition 9.93]{rotmanhom}. 
\end{proof}

Thus the transfer maps give us homomorphisms $\text{Tr}_{n}: H_{n}(\W,\wh{L}) \to H_{n}(C_K,\wh{L}).$ Our goal now is to prove:

\begin{prop}\label{t1p1}
	The map $\tr:\HW \to H_{1}(C_K,\L)^{\G}$ is an isomorphism.
\end{prop}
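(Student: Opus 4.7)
The plan is to use the five-term exact sequence from the Hochschild--Serre spectral sequence for $0 \to C_K \to \W \overset{\s}\to \G \to 0$, together with Tate's theorem for the class formation $(C_K,\G)$, and then to apply the five-lemma to a diagram comparing two naturally constructed short exact sequences.

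Since $C_K$ acts trivially on $\L$, Proposition \ref{p2} identifies $H_1(C_K,\L)$ with $C_K\otimes_{\ZZ}\L$, and the spectral sequence $E^2_{p,q}=H_p(\G,H_q(C_K,\L))\Rightarrow H_{p+q}(\W,\L)$ yields
\[
H_2(\W,\L)\lra H_2(\G,\L)\overset{d}{\lra}(C_K\otimes\L)_\G\overset{\iota}{\lra}H_1(\W,\L)\overset{\pi}{\lra}H_1(\G,\L)\lra 0.
\]
Because $\W$ is classified by the fundamental class $[\u]\in H^2(\G,C_K)$, the transgression $d$ is (up to sign) cap product with $[\u]$. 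By Tate's theorem, since $(C_K,\G)$ is a class formation and $\L$ is $\ZZ$-free, cap product with $[\u]$ factors as an isomorphism $H_2(\G,\L)\xrightarrow{\sim}\H^{-1}(\G,C_K\otimes\L)$ followed by the canonical inclusion $\H^{-1}(\G,C_K\otimes\L)\hookrightarrow(C_K\otimes\L)_\G$ (the kernel of the norm); similarly, $\H^0(\G,C_K\otimes\L)\cong H_1(\G,\L)$. Hence the canonical norm exact sequence translates into
\[
0\lra H_2(\G,\L)\overset{d}{\lra}(C_K\otimes\L)_\G\overset{N}{\lra}(C_K\otimes\L)^\G\overset{q}{\lra}H_1(\G,\L)\lra 0.
\]

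Combining the two sequences, $\mathrm{im}(\iota)\cong\mathrm{coker}(d)\cong\mathrm{im}(N)=\ker(q)$, so both $H_1(\W,\L)$ and $(C_K\otimes\L)^\G$ are extensions of $H_1(\G,\L)$ by $\mathrm{im}(N)$. The transfer $\tr$ sits in the commutative diagram
\[
\xymatrix@R=1.8em@C=1.8em{
0\ar[r]&\mathrm{im}(N)\ar[r]\ar@{=}[d]&H_1(\W,\L)\ar[r]^-{\pi}\ar[d]^-{\tr}&H_1(\G,\L)\ar[r]\ar@{=}[d]&0\\
0\ar[r]&\mathrm{im}(N)\ar[r]&(C_K\otimes\L)^\G\ar[r]^-{q}&H_1(\G,\L)\ar[r]&0
}
\]
whose left square commutes by the standard identity $\tr\circ\iota=N$ for a normal subgroup of finite index. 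The five-lemma then gives that $\tr$ is an isomorphism.

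The main obstacle is the identification of the Hochschild--Serre transgression $d$ with cap product against $[\u]$; at the chain level this amounts to bookkeeping with the $2$-cocycle $u(g,h)\in C_K$ defined by $w_g w_h=u(g,h)w_{gh}$ for the coset representatives $\{w_g\}$ of Notation \ref{ckreps}, and its comparison with the Tate cup-product map. A secondary technical point is verifying the commutativity of the right square above, which is similarly best checked at the chain level using the explicit transfer formula in terms of $\{w_g\}$.
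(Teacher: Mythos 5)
Your proof is correct in outline but takes a genuinely different route from the paper, particularly for injectivity. Both arguments share the same technical core: the identity $\tr\circ\Cor = N_{\G}$ (the paper's Proposition \ref{p51}) and, above all, the commutativity of the square relating $\tr$, $\coinf$, and cup product with $[\u]$ (the paper's Proposition \ref{p6}), which you defer to ``the chain level''—this is where most of the real work lies, and the paper spends Propositions \ref{l7}--\ref{p9} building up to it. Where you diverge is in how you obtain injectivity. The paper truncates the five-term sequence at $H_1(C_K,\L)$, gets surjectivity of $\tr$ by a diagram chase, and then proves injectivity separately by a substantial argument: reducing to $\ker(\Cor)=\ker(N_\G)$, dualizing against $\QQ/\ZZ$, and invoking the criterion (via $\psi_*([\u])=0$ and $\G$-invariance) for when a homomorphism $\psi\colon C_K\to\Hom(\L,\QQ/\ZZ)$ extends to $\W$. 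You instead keep the $H_2$ terms, identify the transgression $d$ with cap product against $[\u]$, and use Tate--Nakayama to conclude $\im(d)=\ker(\bar N)=\wh H^{-1}(\G,C_K\otimes\L)$ exactly; this is precisely the fact $\ker(\Cor)=\ker(N_\G)$ in disguise, and once you have it, the five-lemma gives the isomorphism in one stroke. The trade is a clean one: your approach eliminates the entire dualization-and-extension argument of the paper at the cost of one standard but nontrivial input (the identification of the Hochschild--Serre $d_2$ with cap product against the extension class), which you correctly flag as the main obstacle but do not verify. Both of your flagged gaps are genuine work, but both are fillable—the second is literally Proposition \ref{p6}—so the strategy is sound and arguably more economical than the paper's.
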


Since we are working with the idele class group $C_K$, the Tate--Nakayama Lemma (see \cite[Chapter IX, Section 8]{serrelocal} ) tells us that  we can use cup products to obtain isomorphisms between Tate cohomology groups\footnote{The conditions of the Tate--Nakayama Lemma hold by class field theory and the fact that $\Tor_1^{\ZZ}(\wh{L},C_K)=0$ since $\wh{L}$ is a free and hence flat $\ZZ$-module.}. So we can use Tate--Nakayama to form the following diagram (which will be referred to as ({\bf A}))

\adjustbox{max width=\linewidth,center}{%
	\begin{tikzcd}
	&&& \wh{H}^{-2}(\G,\wh{L}) \arrow[d,equal]\\ 
	& H_{1}(C_{K},\wh{L}) \arrow[d, twoheadrightarrow , "N_{\G}"] \arrow[r,"\Cor"] & H_{1}(W_{K/F},\wh{L}) \arrow[d, "\tr"] \arrow[r, "\coinf"] & H_{1}(\G,\wh{L}) \arrow[d, "\sim" labl] \arrow[r] & 0 & \\
	0 \ar[r]  & N_{\G} (H_{1}(C_{K},\wh{L})) \ar[r, hook] & H_{1}(C_{K},\wh{L})^{\G} \ar[r] & \wh{H}^{0}(\G,H_{1}(C_{K},\wh{L})) \arrow[r] & 0 & \\
	&&& \wh{H}^{0}(\G,\wh{L} \otimes C_K) \arrow[u, equal] \\ 
	\end{tikzcd}
}

Here the top sequence is derived from the standard Lyndon--Hochschild--Serre spectral sequence, the bottom sequence comes from the definition of the Tate cohomology groups and the fourth vertical arrow is given by taking cup products with the fundamental class $[\u]$. Since we will be trying to show this diagram commutes, it will be useful to recall how the maps involved are defined
\begin{itemize}
	\item If $x \in \ZH$, then $\Cor([x])$ is in the class containing the 1-cycle $y \in Z_{1}(\W,\wh{L})$ such that $y(w)=x(w)$ if $w \in C_K$ and $y(w)=0$ elsewhere. 
	\item If $x \in Z_{1}(\W,\wh{L})$, then $\coinf([x])=[y]$, where $y$ is the 1-cycle in $Z_{1}(G,\wh{L})$ such that $$ y(g) = \sum_{a \in C_K} x(aw_g),$$ where $w_g$ is as in \ref{ckreps}.  
\end{itemize}
Note that both of these maps will send cycles to cycles and boundaries to boundaries, so they are well-defined. Our goal is to first  show ({\bf A})  commutes. Once we have this, it follows at once (by a simple diagram chase)  that $\tr$ is surjective; we will then prove that $\tr$ is injective to finish the proof of Proposition \ref{t1p1}.

\subsection{$\tr$ is surjective}
Before proving surjectivity we first need to define $\tr$ and show that its image in in $\HH^{\G}$. To do this the strategy is to use dimension shifting and the definition of $\ttr_0$.

We begin by noting that for any $g \in \G$ and $w \in \W$, we have that  $w_g w \in \W$ belongs to a unique left coset of $C_K$ in $\W$. Therefore there is a unique element $u(w_g,w) \in C_K$ and a unique $j(g) \in \G$, such that $$w_g w = u(w_g,w)w_{j(g)},$$ where $j$ is just a permutation of the elements of $\G$. This can be related to the fundamental class $[\u] \in H^{2}(\G,C_K)$, by noting that the 2-cocycle $\u$ representing the fundamental class has the property that for each $g,g' \in \G$, $w_g w_{g'} = \u(g,g')w_{gg'}$. Therefore, if $w=aw_{g'} \in \W$, then $$u(w_g,w)=w_g a w_g^{-1} \u(g,g').$$

\begin{prop}\label{p5} If\/ $x \in Z_{1}(\W,\L)$, then, for all   $a \in C_K$, we have a well-defined map $$ \left ( \tr(x) \right ) (a) = \sum_{u(w_g,w)=a} w_g x(w), $$ Here the sum on the right is taken over all $g \in \G$ and $w \in \W$ such that $u(w_h,w)=a$. Furthermore, the  image of\/ $\tr$ is in $\HH^{\G}$.

\end{prop}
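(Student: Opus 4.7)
The plan is to check three things: first, that the stated formula produces a finitely supported function $C_K \to \L$ that is automatically a 1-cycle; second, that its class in $H_1(C_K, \L)$ coincides with the abstract transfer $\ttr_1([x])$ from Proposition \ref{Trans}; and third, that this class is fixed by $\G$. The well-definedness is essentially formal: since $x$ has finite support and $\G$ is finite, for each $a \in C_K$ only finitely many pairs $(g, w)$ contribute, and only finitely many $a$ receive nonzero contributions. Because $C_K$ acts trivially on $\L$, the bar-complex differential $\partial_{1}(\lambda[a]) = a\lambda - \lambda$ vanishes identically, so every finitely supported function $C_K \to \L$ is automatically in $Z_{1}(C_K, \L)$.

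For the identification with $\ttr_1$, I would dimension-shift along the short exact sequence of $\W$-modules
\[
0 \lra R \lra \ZZ[\W] \otimes_{\ZZ} \L \overset{\pi}{\lra} \L \lra 0,
\]
where the middle module carries the diagonal $\W$-action and $\pi(w \otimes \lambda) = \lambda$. The $\W$-isomorphism $w \otimes \lambda \mapsto w \otimes w^{-1}\lambda$ shows the middle module is free over $\ZZ[\W]$, and the decomposition $\W = \bigsqcup_{g} w_{g} C_{K}$ further shows it is free over $\ZZ[C_K]$. Hence its positive homology vanishes for both groups, the connecting maps $\delta_\W : H_1(\W, \L) \hookrightarrow H_0(\W,R)$ and $\delta_{C_K} : H_1(C_K, \L) \hookrightarrow H_0(C_K, R)$ are injective, and Proposition \ref{Trans}(2) gives $\delta_{C_K} \circ \ttr_1 = \ttr_0 \circ \delta_\W$. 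A lifting-and-boundary computation yields $\delta_\W[x] = \bigl[\sum_w (w \otimes w\lambda_w - 1 \otimes \lambda_w)\bigr]$; then the explicit $\ttr_0$-formula from Proposition \ref{Trans}(1), combined with the substitution $w_g w = u(w_g, w) w_{j(g)}$ (with $j$ depending on $w$ as well) and a regrouping of the sum by the value of $u(w_g, w) \in C_K$, produces -- after repeated use of the defining relation $(a - 1) r \equiv 0$ in $H_0(C_K,R)$ for $a \in C_K$ and $r \in R$ -- exactly $\delta_{C_K}$ applied to the 1-cycle $a \mapsto \sum_{u(w_g, w) = a} w_g x(w)$. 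The 1-cycle condition $\sum_w (w - 1)\lambda_w = 0$ and the fact that $g \mapsto j(g)$ is a permutation of $\G$ for each fixed $w$ are both essential for the cancellations.

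Finally, the $\G$-invariance of the resulting class is a general feature of the transfer to a normal subgroup, deducible from the uniqueness characterisation in Proposition \ref{Trans}; alternatively, it can be checked directly from the formula. For $g_0 \in \G$, reindexing $g \mapsto g_0^{-1} h$ in $(g_0 \cdot \tr(x))(a) = \sum_{u(w_g, w) = g_0^{-1} \cdot a} g_0 w_g x(w)$ and using $w_{g_0} w_{g_0^{-1} h} = \u(g_0, g_0^{-1} h) w_h$ allows one to absorb the resulting cocycle term $\u(g_0, g_0^{-1} h) \in C_K$ via the bilinearity of the identification $H_1(C_K, \L) \cong \L \otimes_\ZZ C_K$ from Proposition \ref{p2}. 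I expect the main obstacle to be the identification of $\ttr_0 \delta_\W[x]$ with $\delta_{C_K}$ of the claimed 1-cycle: the two representatives generally coincide only modulo the relations defining $H_0(C_K, R)$ rather than at the level of 1-chains, so the bookkeeping of which substitutions land in the kernel of the quotient needs to be handled carefully.
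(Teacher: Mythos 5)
Your plan follows essentially the same route as the paper's proof: dimension-shift along the augmentation-type short exact sequence (your $R$ is the paper's $I_W\otimes\L$), use Proposition \ref{Trans}(2) to reduce to the explicit $\ttr_0$ formula, substitute $w_gw=u(w_g,w)w_{j(g)}$ and regroup by the value of $u(w_g,w)\in C_K$, discarding terms that die in $H_0(C_K,\,\cdot\,)$; the well-definedness and $1$-cycle observations are also the same. The one small divergence is the $\G$-invariance step, where the paper shows the image of $\ttr_0$ lands in $H_{0}(C_K,I_W\otimes\L)^{\G}$ via a commutative diagram, whereas you propose either the uniqueness/functoriality of transfer or a direct reindexing using the $1$-cycle condition to cancel the $\u(g_0,g_0^{-1}h)$ terms -- both of these are sound, and you are in fact slightly more careful than the paper in only claiming $\delta$ is injective rather than an isomorphism.
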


\begin{proof}
	We begin by considering the exact sequence $$0 \lra I_W \lra \ZZ[\W] \overset{\epsilon}\lra \ZZ \lra 0$$ where $\epsilon$ is the augmentation map $\sum_{i} n_i g_i \rightarrow \sum_{i} n_i$ and $I_W$ is the augmentation ideal from \ref{not1} (3). This is split over $\ZZ$, so it remains exact when tensored with $\L$, and thus we  get the exact sequence of $\W$ modules $$ 0 \lra I_W \otimes \L \lra \ZZ[\W] \otimes  \L \lra  \ZZ \otimes \L \lra 0.$$  Note that in this sequence we have $\W$ acting diagonally on each of the terms, but we can find a $\W$-module isomorphism that gives the middle term  an action only on the first term of the tensor product; this then makes $\ZZ[\W] \otimes \L$ an induced module, which means that, for any subgroup $S$ of $\W$, we have $$H_{n}(S,\ZZ[\W] \otimes \L )=0, \qquad \text{for } n>0.$$
	If we now identify $\ZZ \otimes \L$ with $\L$, then we get the exact sequence  $$ 0 \lra I_W \otimes \L \lra \ZZ[\W] \otimes  \L \lra \L  \lra 0$$ where the middle term, is an induced module, so we can  use dimension shifting to get a well-defined isomorphism $$\delta: \HW \overset{\sim}\lra H_{0}(\W, I_W \otimes \L),$$ that sends $[z] \in \HW$ to the class of $$ \sum_{w \in \W} (w^{-1}-1)(1 \otimes z(w)) \qquad  \text{(here the action is diagonal)}.$$ 
	Now by Proposition \ref{Trans} $(2)$, we get the following commutative  diagram
	
	$$
	\xymatrix@C=2em@R=2em{
		H_{1}(\W,\wh{L}) \ar[d]^{\tr} \ar[r]^-{\sim} & H_{0}(W_{K/F},I_W \otimes \wh{L}) \ar[d]^{\text{Tr}_0}\\
		H_{1}(C_{K},\wh{L})\ar[r]^-{\sim} & H_{0}(C_{K}, I_W \otimes \wh{L}).}
	$$
	with the horizontal isomorphisms given by $\delta$, defined above. Now take a 1-cycle $x \in \ZW$, under $\delta$, its image in $H_{0}(\W, I_W \otimes \L)$ is in the class of $\sum_{w \in \W} (w^{-1}-1)(1 \otimes x(w)).$ If we then apply $\ttr_{0}$, we get that it maps to the class of

	\begin{align*}
	\sum_{g,w} w_g \cdot (w^{-1}-1)(1 \otimes x(w)) & = \sum_{g,w} (w_g w^{-1}(1 \otimes x(w))-w_g(1 \otimes x(w)))\\
	& =  \sum_{g, w} w_{g} w^{-1}(1 \otimes x(w))-  \sum_{g,w} w_{g}(1 \otimes x(w)) \tag{$\blacklozenge$}
	\end{align*}
	
	in $H_{0}(C_K,I_W \otimes \L)$. Now, since we can write $w_g w= u(w_g,w)w_{j(g)}$, we can use this and the fact that $j$ is just a permutation of the elements of $\G$, to  write $(\blacklozenge)$ as \[\sum_{h,w} u(w_h,w)^{-1} w_h(1 \otimes x(w))- \sum_{g} \sum_{w} w_g(1 \otimes x(w))\]
	which after changing the summation index in the second term gives \[ \sum_{h,w} (u(w_h,w)^{-1}-1)w_h (1 \otimes x(w)).\]
	This can be rewritten as $$\sum_{a \in C_K} \left \{ (a^{-1}-1) \sum_{u(w_h,w)=a} w_h (1 \otimes x(w)) \right \}.$$
	Recall that we define the  action of $w_g \in \W$ on $a \otimes b$ as $w_g(a \otimes b)= w_g a \otimes w_g b$ and also note that \[(w_h-1)\otimes w_h x(w)=w_h(1 \otimes x(w))-(1 \otimes w_h x(w)),\] but the term on the left is clearly in $I_W \otimes \L$ so by definition of $H_{0}$ we have that the sum above is in  the same class  as  $$\sum_{a \in C_K} \left \{ (a^{-1}-1) \sum_{u(w_h,w)=a} 1 \otimes w_h  x(w) \right \},$$ in $H_{0}(C_K, I_W \otimes \L)$. But this is just the image under $\delta$ of the class of the 1-cycle $y \in \ZH$, where $y$ is defined as $$y: a \mapsto \sum_{u(w_h,w)=a} w_h  x(w).$$ Observe that this is indeed a 1-cycle, since $C_K$ acts trivially on $\L$, so $\sum_{a \in C_K} a^{-1}y(a)=\sum_{a \in C_{K}} y(a).$ Furthermore  it has finite support since $x$ has finite support.
	So by dimension shifting, it follows that $\tr([x])=[y]$.
	
	Lastly, we need to show that the image of $\tr$ is in $\HH^{\G}$, for which it suffices to show that for all $g \in \G$ and $x \in \ZW$, the class of $g \cdot \tr([x])$ is the same as the class of $\tr([x])$. From the definition of $\ttr_{0}$, it follows that the image of $\ttr_{0}$ is in $H_{0}(C_K,I_W \otimes \L)^{\G}$. Therefore we have the  following commutative diagram.

	$$
	\xymatrix@C=2em@R=2em{
		H_{1}(\W,\wh{L}) \ar[d]^{\tr} \ar[r]^-{\sim}  \ar `l[d] `[dd]_{g  \cdot \tr}  [dd] & H_{0}(W_{K/F},I_W \otimes \wh{L}) \ar[d]^{\text{Tr}_0} \ar `r[d] `[dd]^-{\ttr_0}  [dd]\\
		H_{1}(C_{K},\wh{L})\ar[r]^-{\sim} \ar[d]^{g \cdot } & H_{0}(C_{K}, I_W \otimes \wh{L}) \ar[d]^{g \cdot } \\
		H_{1}(C_{K},\wh{L})\ar[r]^-{\sim} & H_{0}(C_{K}, I_W \otimes \wh{L})
	}
	$$
	From this and \cite[Proposition 9.93]{rotmanhom}  we get that the class of $(g \cdot \tr)([x])$ is the same as the class of $\tr([x])$ for all $x \in \ZW$. So the image of $\tr$ is in $\HH^{\G}$.
	
\end{proof}

\begin{prop}\label{p51} The square 
	
	\begin{center}
		\begin{tikzcd}	
		H_{1}(C_{K},\wh{L}) \arrow[d]^{N_{\G}} \arrow[r, "\Cor"] & H_{1}(W_{K/F},\wh{L}) \arrow[d, "\tr"]\\
		N_{G}(H_{1}(C_{K},\wh{L})) \arrow[r, hook] & H_{1}(C_{K},\wh{L})^{\G} 
		\end{tikzcd}
	\end{center}
	
	is commutative.

\end{prop}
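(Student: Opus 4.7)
The strategy is a direct diagram chase: take a 1-cycle $x \in Z_{1}(C_{K},\L)$ representing a class in $H_{1}(C_{K},\L)$, compute the effect of both paths of the square on $x$, and check the two resulting 1-cycles in $Z_{1}(C_{K},\L)$ agree. Since corestriction, the transfer, and the norm all send boundaries to boundaries, it suffices to work at the level of cycles.

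Going first to the right and then down: by the explicit formula for $\Cor$ recalled just before Proposition \ref{p5}, $\Cor([x])$ is represented by the 1-cycle $y \in Z_{1}(\W,\L)$ with $y(w)=x(w)$ if $w\in C_{K}$ and $y(w)=0$ otherwise. Now apply the formula for $\tr$ from Proposition \ref{p5}:
\[
(\tr[y])(a) \;=\; \sum_{u(w_{g},w)=a} w_{g}\cdot y(w),
\]
where the only non-zero contributions come from $w\in C_{K}$. The key observation, and the technical heart of the argument, is the identification of $u(w_{g},w)$ when $w\in C_{K}$: writing $w_{g}w = (w_{g}ww_{g}^{-1})\,w_{g}$ and using that $C_{K}$ is normal in $\W$, we get $j(g)=g$ and $u(w_{g},w)=w_{g}ww_{g}^{-1}$, which is just the Galois action $g\cdot w$ of $g=\sigma(w_{g})$ on $C_{K}$. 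Thus the condition $u(w_{g},w)=a$ becomes $w=g^{-1}\cdot a$, and
\[
(\tr\circ\Cor)([x])(a) \;=\; \sum_{g\in\G} w_{g}\cdot x(g^{-1}\cdot a) \;=\; \sum_{g\in\G} g\cdot x(g^{-1}\cdot a),
\]
using that the action of $\W$ on $\L$ factors through $\G$.

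Going first down and then right: by the $\G$-action on 1-cycles recalled in the footnote of Proposition \ref{p2}, we have $(g\cdot x)(a)=g\cdot x(g^{-1}\cdot a)$. Therefore
\[
N_{\G}(x)(a) \;=\; \sum_{g\in\G}(g\cdot x)(a) \;=\; \sum_{g\in\G} g\cdot x(g^{-1}\cdot a),
\]
which is exactly the same 1-cycle as above. This yields commutativity of the square after passing to classes, and also re-confirms that $\tr([x])$ lies in the image $N_{\G}\bigl(H_{1}(C_{K},\L)\bigr)$ when $[x]$ comes from $\Cor$, so the factorisation through the bottom horizontal arrow is consistent.

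The only substantive step is the identification $u(w_{g},w)=g\cdot w$ for $w\in C_{K}$; everything else is unwinding definitions. Consequently the proof is short once one has set up the notation of Proposition \ref{p5}.
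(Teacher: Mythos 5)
Your proof is correct and follows essentially the same route as the paper's: both compute $\tr\circ\Cor$ on a 1-cycle via the explicit formula for $\tr$ from Proposition~\ref{p5}, reduce the condition $u(w_g,w)=a$ for $w\in C_K$ to the conjugation $w_g w w_g^{-1}=a$, and recognise the resulting sum as $N_\G(x)(a)$. You spell out the identification $u(w_g,w)=g\cdot w$ and the convention $(g\cdot x)(a)=g\,x(g^{-1}a)$ a bit more explicitly than the paper does, but the argument is the same.
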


\begin{proof} We only need to show that $\tr \circ \Cor = N_{\G}$. This follows from  \cite[Chapter III, Proposition 9.5]{brown}), but the proof is simple so we include it for completeness. First note that, if $[x] \in  H_{1}(C_K,\L)$, then $\Cor([x])$ only has support in $C_K$, so $\tr(\Cor([x]))= [y]$, with $y \in Z_{1}(C_K,\L)$, such that $$y( a)= \sum_{u(w_h,w)=a} w_h  (\Cor(x))(w)= \sum_{w_h b w_h^{-1}=a} w_h x(b), $$ for $b \in C_K.$ Now we simply note that
	$ \sum_{h \in \G} h \cdot x(a)= (N_{\G}(x))(a).$

\end{proof}

Before continuing, we first need a way to express the action of taking cup products in terms of cycles and cocycles, for which we have the following three results.

\begin{lem}\label{l7} Let $G$ be a finite group and let $A,B$ be $G$-modules where we make the notational convention that for $a \in A$ with $N_G(a)=0$, we  let  $[a]^{0}, [a]_{-1}$ denote the canonical images of $a$ in $\H^{0}(G,A), \H^{-1}(G,A)$ respectively.

	Let $a \in A$ with $N_G(a)=0$ and $[r]$ is the class of a 1-cocycle $r \in Z^{1}(G,B)$. Then $$[a]_{-1} \cup [r] = [c]^{0},$$ where $$c =- \sum_{g \in G} ga \otimes r(g).$$

\end{lem}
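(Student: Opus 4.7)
The plan is to prove the formula by dimension-shifting in the first variable and reducing to the standard cup-product formula in positive degree. First I would check that $c = -\sum_g ga \otimes r(g)$ is genuinely $G$-invariant, so that $[c]^0 \in \H^{0}(G, A \otimes B) = (A \otimes B)^G / N_G(A \otimes B)$ is a well-defined class; this is a short computation using the 1-cocycle identity $r(hg) = r(h) + h\,r(g)$ together with the hypothesis $N_G(a) = 0$ to cancel a stray $N_G(a) \otimes r(h)$ term.

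Next, tensor the augmentation sequence $0 \to I_G \to \ZZ[G] \to \ZZ \to 0$ with $A$ (with the diagonal $G$-action) to obtain
\[
0 \to I_G \otimes A \to \ZZ[G] \otimes A \to A \to 0.
\]
The middle term is induced and hence cohomologically trivial, so the connecting homomorphism $\delta_A : \H^{-1}(G, A) \overset{\sim}{\to} \H^{0}(G, I_G \otimes A)$ is an isomorphism; a direct snake-lemma computation (lift $a$ to $1 \otimes a$, apply $N_G$, and observe that the image in $A$ is $N_G(a)=0$) gives $\delta_A([a]_{-1}) = [\sum_g g \otimes ga]^0$. Tensoring the same sequence with $B$ similarly produces an injection $\delta_{AB} : \H^{0}(G, A \otimes B) \hookrightarrow H^1(G, I_G \otimes A \otimes B)$, and compatibility of cup products with connecting homomorphisms in the first variable yields
\[
\delta_{AB}\bigl([a]_{-1} \cup [r]\bigr) = \delta_A([a]_{-1}) \cup [r].
\]
It therefore suffices to show $\delta_{AB}([c]^0) = \delta_A([a]_{-1}) \cup [r]$ in $H^1(G, I_G \otimes A \otimes B)$.

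For the right-hand side, the standard cochain-level formula for the $\H^{0} \cup H^1 \to H^1$ pairing gives the 1-cocycle $h \mapsto \sum_g g \otimes ga \otimes r(h)$. For the left, $\delta_{AB}([c]^0)$ is represented (by lifting $c$ to $1 \otimes c \in \ZZ[G] \otimes A \otimes B$) by the 1-cocycle $h \mapsto (h - 1) \otimes c = -\sum_g (h - 1) \otimes ga \otimes r(g)$. The main step, and the main obstacle, is to exhibit an explicit element $\psi \in I_G \otimes A \otimes B$ whose coboundary $h\psi - \psi$ realises the difference of these two 1-cocycles; taking $\psi = \sum_g (1 - g) \otimes ga \otimes r(g)$ and expanding $h\psi - \psi$ via the cocycle identity and $N_G(a) = 0$ produces exactly the required difference (the changes of summation variable $k = hg$ collapse the mixed terms, and the $N_G(a)$-terms drop out). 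This is where the minus sign in the definition of $c$ is forced, and once it is in place injectivity of $\delta_{AB}$ gives $[a]_{-1} \cup [r] = [c]^0$.
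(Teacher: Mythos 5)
Your argument is correct. The paper itself gives no proof of this lemma, citing it directly to Serre (\emph{Local Fields}, Ch.~VIII, Lemma~2, pp.~176--177), so the only meaningful comparison is to Serre's own proof, which is likewise by dimension shifting along $0 \to I_G \to \ZZ[G] \to \ZZ \to 0$: you are reconstructing essentially the standard argument. I checked the two computations you flag as the crux: the $G$-invariance of $c$ does reduce, via $hr(g)=r(hg)-r(h)$, to a vanishing $N_G(a)\otimes r(h)$ term; and with $\psi=\sum_{g}(1-g)\otimes ga\otimes r(g)$ one indeed gets
\[
h\psi-\psi \;=\; \sum_{k}(h-1)\otimes ka\otimes r(k)\;+\;\sum_{k}k\otimes ka\otimes r(h),
\]
which is exactly $\bigl(\delta_A([a]_{-1})\cup[r]\bigr)(h)-\bigl(\delta_{AB}([c]^{0})\bigr)(h)$, using $N_G(a)=0$ to kill the stray term and a substitution $k=hg$ elsewhere. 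Since $\delta_{AB}$ is injective (in fact an isomorphism, the middle term being induced), this gives $[a]_{-1}\cup[r]=[c]^{0}$ as required. One minor remark: the compatibility $\delta(\alpha\cup\beta)=\delta(\alpha)\cup\beta$ holds with no sign precisely because you shift in the \emph{first} variable; had you shifted in the second, a sign $(-1)^{\deg\alpha}=(-1)^{-1}$ would have appeared and the minus in $c$ would need re-examination, so it is worth keeping that choice explicit as you have done.
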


\begin{proof}See \cite[ Lemma 2, p.\ 176--177.]{serrelocal}. 
\end{proof}

This lemma is just what we need to be able to express the action of taking cup products in terms in cycles and cocycles. Recall that for any finite group $G$ and  $G$-module $B$  we can form the exact sequence $$0 \lra I_G \otimes B \lra \ZZ[G] \otimes B \lra B \lra 0 $$ as we did in Proposition \ref{p6}. Similarly, since the category of $G$-modules has enough injectives, we can find $G$-modules $B',B''$ such that $$ 0 \lra B \lra B' \lra B'' \lra 0$$ is an exact sequence and $B'$ is an induced module. From which we get isomorphisms 
\begin{align*}
& \delta : \H^{n}(G,B) \overset{\sim}{\lra} \H^{n+1}(G, I_G \otimes B);\\
&\partial : \H^{n}(G, B'')   \overset{\sim}{\lra}\H^{n+1} (G,B).
\end{align*}
In particular,  when $n=-1$ the isomorphisms are induced by $N_G$.  So we get
\begin{align*}
&\delta: \H^{-1}(G,B) \lra \H^{0}(G, I_G \otimes B)\\
&[y]   \longmapsto \left [ \sum_{g \in G} g \otimes g.y  \right ] \qquad 
\end{align*}
and
\begin{align*}
\partial: \H^{-1}(G,B'') &\lra \H^{0}(G,  B)\\
[x] &  \longmapsto \left [ \sum_{g \in G}  g. x \right ].
\end{align*}

\begin{prop}\label{p8} Let $G$ be a finite group, and let $A,B$ be $G$-modules. If $f \in \Z^{1}(G,A)$ is a 1-cocycle and $x \in \Z^{-2}(G,B)$ is a 1-cycle, then $[x] \cup [f]$ is in the class of  $$F:=- \sum_{g \in G} x(g) \otimes f(g) $$ in $\H^{-1}(G,B \otimes A).$

\end{prop}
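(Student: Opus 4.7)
The strategy is to reduce to Lemma \ref{l7} via dimension shifting in the first factor of the cup product. Using the exact sequence
\[ 0 \to I_G \otimes B \to \ZZ[G] \otimes B \to B \to 0 \]
(middle term cohomologically trivial, being an induced module) we get the connecting-homomorphism isomorphism $\delta_B : \widehat{H}^{-2}(G, B) \xrightarrow{\sim} \widehat{H}^{-1}(G, I_G \otimes B)$. Tensoring with $A$, which preserves exactness since the original sequence is $\ZZ$-split, produces the companion isomorphism $\delta_{B \otimes A} : \widehat{H}^{-1}(G, B \otimes A) \xrightarrow{\sim} \widehat{H}^{0}(G, I_G \otimes B \otimes A)$, whose explicit formula $\delta_{B \otimes A}[y] = [\sum_g g \otimes g \cdot y]^0$ is the one recalled just before Lemma \ref{l7}.

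By the standard naturality of cup products with respect to connecting homomorphisms in Tate cohomology, $\delta_{B \otimes A}([x] \cup [f]) = \delta_B[x] \cup [f]$ in $\widehat{H}^0(G, I_G \otimes B \otimes A)$. Since $\delta_{B \otimes A}$ is injective, it therefore suffices to verify $\delta_{B \otimes A}[F] = \delta_B[x] \cup [f]$ there. The right-hand side becomes amenable to Lemma \ref{l7} once we produce a representative $a \in \ker N_G \subset I_G \otimes B$ of $\delta_B[x]$. Tracking the connecting map through the inhomogeneous bar resolution, a natural candidate is $a = \sum_{g \in G}(g^{-1}-1) \otimes x(g)$; expanding $N_G(a) = \sum_{k}k \otimes k\cdot\bigl(\sum_g(g-1)x(g)\bigr)$ and invoking the $1$-cycle condition $\sum_g(g-1)x(g) = 0$ on $x$ shows $N_G(a) = 0$ as required. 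Lemma \ref{l7} then gives $\delta_B[x] \cup [f] = [-\sum_{g,h}(gh^{-1}-g)\otimes gx(h) \otimes f(g)]^0$, while the recalled formula (after first checking $N_G(F) = 0$, which combines the $1$-cycle and $1$-cocycle conditions) yields $\delta_{B \otimes A}[F] = [-\sum_{g,h} g \otimes gx(h) \otimes gf(h)]^0$.

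The main obstacle is then the comparison of these two explicit double sums in $\widehat{H}^{0}(G, I_G \otimes B \otimes A) = (I_G \otimes B \otimes A)^G / N_G(I_G \otimes B \otimes A)$. Substituting the $1$-cocycle identity $gf(h) = f(gh) - f(g)$, reindexing appropriately (for instance via $k = gh$ or $k = gh^{-1}$ in one of the sums), and collecting terms should allow the difference to be written explicitly as $N_G(\xi)$ for a suitable $\xi \in I_G \otimes B \otimes A$, with the $1$-cycle relation on $x$ mopping up any residual terms. This bookkeeping is where the technical weight of the proof sits; the conceptual structure of the argument (dimension shifting plus naturality of cup product) is already in place once Lemma \ref{l7} has been invoked.
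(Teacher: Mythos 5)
Your overall architecture matches the paper's: dimension-shift via $0 \to I_G\otimes B \to \ZZ[G]\otimes B\to B\to 0$, invoke compatibility of cup products with the connecting map $\delta$, reduce to Lemma~\ref{l7}, and compare explicit representatives in $\H^{0}(G,I_G\otimes B\otimes A)$. However there are two concrete problems.

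First, your representative of $\delta_B[x]$ is wrong. You take $a=\sum_{g}(g^{-1}-1)\otimes x(g)$, viewing $(g^{-1}-1)$ as an element of $I_G$ in the first tensor slot. The correct element (which the paper calls $b$) arises from letting $(g^{-1}-1)\in\ZZ[G]$ \emph{act diagonally} on the lift $1\otimes x(g)\in\ZZ[G]\otimes B$, namely $b=\sum_g\bigl(g^{-1}\otimes g^{-1}x(g)-1\otimes x(g)\bigr)$. These are genuinely different elements, and the discrepancy matters: with the paper's $1$-cycle convention $\sum_g(g^{-1}-1)x(g)=0$ (stated explicitly in the proof), one has $N_G(a)=\sum_k k\otimes k\cdot\bigl(\sum_g(g-1)x(g)\bigr)$, and $\sum_g(g-1)x(g)$ need not vanish. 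For instance with $G=\ZZ/4=\langle s\rangle$, $B=\ZZ[i]$ where $s$ acts by multiplication by $i$, and $x(s)=1+i$, $x(s^2)=-i$, $x(s^3)=x(1)=0$, the paper's cycle condition holds while $\sum_g(g-1)x(g)=-2+2i\neq 0$, so $N_G(a)\neq 0$. Thus $a$ does not even define a class in $\H^{-1}(G,I_G\otimes B)$; you have silently substituted the cycle condition $\sum_g(g-1)x(g)=0$, which is a different (and here incompatible) convention.

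Second, and independently, you do not carry out the final comparison of the two $\H^{0}$-representatives, deferring it as ``bookkeeping.'' That step \emph{is} the proof: it is precisely where the reindexing $k=hg^{-1}$ and the $1$-cocycle identity $f(hg)=f(h)+hf(g)$ combine to collapse the expression onto $\delta(F)$. In the paper the two equal each other on the nose (as elements, not merely mod $N_G$) once those substitutions are made; nothing needs to be absorbed into $N_G(\xi)$ as you anticipate, so the shape of the conclusion is also off. You should (i) replace $a$ by $b=\sum_g(g^{-1}-1)(1\otimes x(g))$, (ii) verify $N_G(b)=0$ using the correct cycle condition $\sum_g g^{-1}x(g)=\sum_g x(g)$, and (iii) actually perform the reindexing-plus-cocycle computation that identifies $-\sum_h h\cdot b\otimes f(h)$ with $-\sum_{g,h}h\otimes hx(g)\otimes hf(g)=\delta(F)$.
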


\begin{proof} First note that since we have an isomorphism $\H^{-1}(G,B \otimes A) \cong \H^{0}(G,I_G \otimes B \otimes A)$ induced by $N_G$, which we denote by $\delta$. In order to prove the result, it is enough to check that  the class of  $\delta([x] \cup [f])=\delta([x]) \cup[f]$ in $\H^{0}(G,I_G \otimes B \otimes A)$ is the class containing $$\delta(F)= - \sum_{g,h \in G} h \otimes h x(g) \otimes h f(g).$$ Now, as before, we have that under $\delta$ the image of $[x]$ in $\H^{-1}(G,I_G \otimes B)$ is in the class of $$b = \sum_{g \in G} (g^{-1}-1)(1 \otimes x(g)).$$ Note that $$ N_G(b)=\sum_{g,h \in G} (hg^{-1}-h)(1 \otimes x(g))=\sum_{g,h} hg^{-1}(1 \otimes x(g))- \sum_{g,h}h(1 \otimes x(g)).$$ Since we are working in $I_G \otimes B$ we have that this can be rewritten as $\sum_{g,h} 1 \otimes hg^{-1} x(g) - \sum_{g,h}1 \otimes hx(g).$ But now recall that since $x$ is a 1-cycle, we have $\sum_{g \in G} g^{-1}x(g) = \sum_{g} x(g),$ which combined with the above, tells us that $N_G(b)=0$. Therefore we can apply Lemma \ref{l7} with $A$ and $B$ replaced by $I_G \otimes B$ and $A$ respectively,
	to get that the class of $[b] \cup [f]$ in $\H^{0}(G,I_G \otimes B \otimes A)$ is the class containing
	
	\begin{align*}
	-\sum_{h} h.b \otimes f(h)&=- \sum_{h,g} hg^{-1} \otimes hg^{-1}x(g) \otimes f(h) + \sum_{h,g} h \otimes hx(g) \otimes f(h) \\
	&=- \sum_{h,g } h \otimes hx(g) \otimes f(hg) + \sum_{h,g } h \otimes hx(g) \otimes f(h). 
	\end{align*}
	However, since $f$ is a 1-cocycle we have that $f(hg)=f(h)+hf(g)$, which after substituting gives $$- \sum_{g,h \in G} h \otimes h x(g) \otimes h f(g).$$

\end{proof}

Now we can use this to get  a result for 2-cocycles.

\begin{prop} \label{p9}Let $G$ be a finite group and let $A,B$ be $G$-modules. If $f \in \Z^{2}(G,B)$ is a 2-cocycle and $x \in \Z^{-2}(G,A)$ is a 1-cycle, then the class of $[f] \cup [x]$ in $\H^{0}(G,B \otimes A)$ is the class containing $$\sum_{g,h \in G} f(g,h) \otimes g x(h)$$

\end{prop}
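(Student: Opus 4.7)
The strategy is to reduce to Proposition \ref{p8} by dimension-shifting the cohomological variable from degree $2$ down to degree $1$. To this end, I would choose a short exact sequence of $\G$-modules
$$0 \lra B \lra P \lra Q \lra 0$$
with $P$ an induced $\G$-module (hence Tate-acyclic), so that the connecting map $\partial : \H^1(\G, Q) \overset{\sim}{\lra} \H^2(\G, B)$ is an isomorphism. I would then pick a representative $f_1 \in \Z^1(\G, Q)$ with $\partial[f_1] = [f]$; concretely, fixing a set-theoretic section $\sigma : Q \to P$ and setting $\phi := \sigma \circ f_1$, the $B$-valued 2-cocycle $F(g, h) := g\phi(h) - \phi(gh) + \phi(g)$ represents $[f]$, so up to a coboundary I may replace $f$ by $F$.

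Next, by naturality of the cup product with respect to the boundary map of the tensored sequence $0 \to B \otimes A \to P \otimes A \to Q \otimes A \to 0$ (exact because the modules $A$ of interest are $\ZZ$-flat, e.g.\ $A = \L$), one has
$$[f] \cup [x] \;=\; \partial\bigl([f_1] \cup [x]\bigr) \quad \text{in } \H^0(\G, B \otimes A).$$
Applying Proposition \ref{p8}, combined with graded commutativity of the Tate cup product and the canonical swap $A \otimes Q \cong Q \otimes A$, identifies $[f_1] \cup [x] \in \H^{-1}(\G, Q \otimes A)$ with the class of $y := -\sum_g f_1(g) \otimes x(g)$, which lifts along $P \otimes A \twoheadrightarrow Q \otimes A$ to $\tilde y := -\sum_g \phi(g) \otimes x(g) \in P \otimes A$.

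The Tate connecting map $\H^{-1} \to \H^0$ then sends $[y]$ to the class of $N_\G(\tilde y) \in (B \otimes A)^\G$. Expanding
$$N_\G(\tilde y) \;=\; -\sum_{g, h \in \G} h\phi(g) \otimes hx(g)$$
and substituting $h\phi(g) = F(h,g) + \phi(hg) - \phi(h)$ splits this into three double sums. After reindexing $k = hg$ in the middle sum and invoking the 1-cycle identity $\sum_g g^{-1} x(g) = \sum_g x(g)$, the second and third sums should cancel, and the first, after a dummy swap $g \leftrightarrow h$, should yield the asserted representative $\sum_{g,h} f(g,h) \otimes gx(h)$.

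The principal obstacle will be maintaining consistent sign conventions: the sign in Proposition \ref{p8}, the sign from graded commutativity $[\alpha] \cup [\beta] = (-1)^{|\alpha||\beta|}[\beta] \cup [\alpha]$, and the chosen explicit sign for the Tate connecting map all interact in the final formula, so I would pin down one convention at the outset (e.g.\ that of \cite{serrelocal}) and verify compatibility throughout. A secondary technical point is the exactness of the tensored sequence, which is automatic here by $\ZZ$-flatness of the $A$ relevant to the paper.
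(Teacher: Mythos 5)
Your proposal follows the paper's proof essentially line by line: dimension-shift $[f]$ down to a $1$-cocycle $f_1$ (the paper's $f''$) via a short exact sequence with induced middle term, use compatibility of the cup product with the connecting map to reduce to Proposition~\ref{p8}, then substitute $h\phi(g) = F(h,g) + \phi(hg) - \phi(h)$ and cancel the residual terms via the $1$-cycle identity. The reindexing $k = hg$ versus the paper's $g' = gh$, and which of the two summation indices plays the role of the norm variable, are cosmetic differences.

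The sign question you flag is in fact the one point of genuine divergence, and it is worth pinning down. You carry the minus from Proposition~\ref{p8} (the graded-commutativity factor is $(-1)^{1\cdot(-2)} = +1$, so the minus survives the reordering and tensor swap), arriving at $y = -\sum_g f_1(g)\otimes x(g)$; the paper's displayed chain of equalities silently omits this sign at the step passing from $\delta\bigl([f'']\cup[x]\bigr)$ to $\delta\bigl(\bigl[\sum_h f''(h)\otimes x(h)\bigr]\bigr)$. Since neither the Tate connecting map (lift and apply $N_{\G}$) nor the cancellation of the $\phi(hg)-\phi(h)$ terms introduces a further sign, and since $\partial(\alpha)\cup\beta = \partial(\alpha\cup\beta)$ with no sign when the boundary is applied to the left factor, your faithful bookkeeping delivers $-\sum_{g,h} f(g,h)\otimes gx(h)$ rather than the stated $+\sum_{g,h} f(g,h)\otimes gx(h)$. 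This is not a flaw in your strategy; it is a sign discrepancy between the statements of \ref{p8} and \ref{p9} that the paper's own proof does not reconcile. Before finalizing you should either trace the sign in \ref{p8} back to Serre's Lemma~2 and fix a single convention, or note explicitly that the representative in \ref{p9} is determined only up to a global sign that is immaterial in the downstream application in Proposition~\ref{p6}.
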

Note that, in this case, $[f] \cup[x] =[x] \cup [f]$.

\begin{proof}(Based on J.P Serre \cite[Lemma 4, p.\ 178.]{serrelocal}) We begin by noting that since we have an exact sequence $0 \to B \to B' \to B'' \to 0,$ with $B'$ an induced module, then  $H^{2}(G,B')=0$. This means we can find a 1-cochain $f':G \to B'$, such that $$f(g,h)=gf'(h)-f'(gh)+f'(g).$$
	If we compose $f'$ with the map $B' \to B''$, we get  a 1-cocycle $f'' :G \to B''$, such that $\partial([f''])=[f]$.
	We can then use this and the previous proposition to see that 
	\begin{align*} 
	[\overline{f} \cup \overline{x}] =  [\partial([f'']) \cup [x] ]=\delta ( [[f''] \cup [x]) \overset{(\ref{p8})}{=\joinrel=}&\delta \left ( \left [ \sum_{h \in G} f''(h) \otimes x(h) \right ] \right )\\ =& \left [ \sum_{g,h \in G} g \cdot f'(h) \otimes g x(h) \right ].\tag{$\dagger$} 
	\end{align*}
	(In the last equality we change from $f''$ to $f'$ since by definition of $\delta$ we must first lift to $B'$.)
	
	Now, we know that $g \cdot f'(h) =f(g,h) + f'(gh) - f'(g),$ so $(\dagger)$ becomes the class containing \[\sum_{g,h \in G} \left \{ f(g,h) + f'(gh) - f'(g) \right \} \otimes g x(h)\] which we expand as \[\sum_{g,h} f(g,h) \otimes g x(h) + \sum_{g,h} (f'(gh)-f'(g)) \otimes gx(h).\] Therefore, in order to finish the proof we have to show that the second term is actually zero. By changing summation indexes we have \begin{align*}\sum_{g,h} (f'(gh)-f'(g)) \otimes gx(h)&=\sum_{g,h} f'(gh) \otimes gx(h) - \sum_{g,h} f'(g) \otimes gx(h)\\ &=\sum_{g,h} f'(g) \otimes gh^{-1} x(h) - \sum_{g,h} f'(g) \otimes gx(h)\\
	&=\sum_{g} f'(g) \otimes g \left ( \sum_{h} (h^{-1}-1)x(h) \right )  =0. \end{align*} with the last equality due to $x$ being a $1$-cocycle.
\end{proof}
Now with this result we can prove:

\begin{prop}\label{p6} The square 
	
	\centerline{
		\xymatrix@C=2em@R=2em{
			H_{1}(\W,\wh{L}) \ar[d]^{\tr} \ar[r]^{\coinf} & H_{1}(\G,\wh{L}) \ar[d]^{\cup [\u]}\\
			H_{1}(C_{K},\wh{L})^{\G} \ar[r]& \H^{0}(\G,\Hom(L,C_K))
	}}
	is commutative.

\end{prop}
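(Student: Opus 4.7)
The strategy is to take a representative $1$-cycle $x \in Z_1(\W, \L)$ and chase it around both sides of the square, landing in $\H^0(\G, C_K \otimes_{\ZZ} \L)$, which is identified with $\H^0(\G, \Hom(L, C_K))$ via the $\G$-equivariant isomorphism $H_1(C_K, \L) \cong C_K \otimes_{\ZZ} \L$ of Proposition \ref{p2} (sending a $1$-cycle $z$ to $\sum_a a \otimes z(a)$). Under this identification, the unlabelled bottom arrow of the square is the canonical projection $(C_K \otimes \L)^{\G} \twoheadrightarrow \H^0(\G, C_K \otimes \L)$, so commutativity reduces to showing that two explicit elements of $(C_K \otimes \L)^{\G}$ differ by an element of $N_\G(C_K \otimes \L)$.

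Going clockwise, I use the formula for coinflation recalled just after diagram $(\mathbf{A})$ to write $\coinf([x]) = [y]$ with $y(h) = \sum_{a \in C_K} x(aw_h)$, and then apply Proposition \ref{p9} with $f = \u$ and $1$-cycle $y$ to express $[y] \cup [\u]$ as the class of
\[
\sum_{g, h \in \G} \u(g,h) \otimes g \cdot y(h) \;=\; \sum_{g, h \in \G,\; a \in C_K} \u(g,h) \otimes g \cdot x(aw_h).
\]
Going counter-clockwise, Proposition \ref{p5} presents $\tr([x])$ by the $1$-cycle $z(a) = \sum_{u(w_g, w) = a} w_g \cdot x(w)$, which under the identification $H_1(C_K, \L) \cong C_K \otimes \L$ becomes $\sum_{g \in \G,\, w \in \W} u(w_g, w) \otimes w_g \cdot x(w)$. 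Writing every $w \in \W$ uniquely as $bw_h$ with $b \in C_K$ and $h \in \G$, the relation $w_g b w_h = (g \cdot b)\,\u(g,h)\,w_{gh}$ yields $u(w_g, bw_h) = (g \cdot b)\,\u(g,h)$, and the fact that $w_g$ acts on $\L$ through its image $g \in \G$ rewrites this element as $\sum_{g, h \in \G,\, b \in C_K} (g \cdot b)\,\u(g,h) \otimes g \cdot x(bw_h)$.

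Writing $C_K$ multiplicatively inside the $\ZZ$-linear tensor product, bilinearity splits this sum as
\[
\sum_{g, h, b} \u(g, h) \otimes g \cdot x(bw_h) \;+\; \sum_{g, h, b} (g \cdot b) \otimes g \cdot x(bw_h),
\]
in which the first summand is exactly the clockwise expression, while the second equals $N_{\G}\!\bigl(\sum_{h, b} b \otimes x(bw_h)\bigr)$ and hence vanishes in $\H^0(\G, C_K \otimes \L)$. This forces the two paths around the square to agree. The main obstacle is not any conceptual difficulty but careful bookkeeping: correctly combining the explicit formulas for $\tr$, $\coinf$, the cup product from Proposition \ref{p9}, and the identification $H_1(C_K, \L) \cong C_K \otimes \L$, together with the identity $u(w_g, bw_h) = (g \cdot b)\,\u(g,h)$. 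Once everything is set up in $C_K \otimes \L$, the agreement is a one-line split and the residual difference is visibly a norm.
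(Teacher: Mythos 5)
Your proposal is correct and follows essentially the same approach as the paper's own proof: compute $\tr([x])$ via Proposition \ref{p5}, expand $u(w_g, bw_h) = (g\cdot b)\,\u(g,h)$, split off a norm term, and match the remainder with the cup-product formula of Proposition \ref{p9} applied to $\coinf([x])$. The only difference is cosmetic: you work additively in $C_K \otimes \L$ throughout, whereas the paper works multiplicatively inside $\Hom(L, C_K)$ with exponential notation, but the underlying computation is identical.
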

Note that once we have the commutativity of this square we will at once have that diagram ({\bf{A}} is commutative (after using the isomorphism $\HH \cong \Hom(L,C_K)$).

\begin{proof}
	
	We begin by taking a 1-cycle $x \in \ZW$. From Proposition \ref{p5}, its image in $\HH^{\G}$ under $\tr$ is in the class of the 1-cycle $$y: a \longmapsto \sum_{u(w_h,w)=a}  w_h  x(w).$$ As before the sum is taken over all $h \in \G$ and $w \in \W$ such that $u(w_h,w)=a$. 
	Under the isomorphism of Proposition \ref{p2} and the natural map  $\HH^{\G} \to \H^{0}(\G,\HH)$, we get that the image $[y]$  in $\H^{0}(\G,\Hom(L,C_K))$ is in the class containing the homomorphism $$\l \longmapsto \prod_{a \in C_K} a^{\langle \l ,y(a) \rangle}= \prod_{a} \prod_{u(w_h,w)=a} a^ { \langle \l, w_h x(w) \rangle}=\prod_{h,w} u(w_h,w)^{ \langle \l, w_h x(w) \rangle}.$$
	Since each $w \in \W$ can be written as $aw_g$ for some $g \in \G$ and $a \in C_K$,  we have $$u(w_h,w)=w_h a{ w_h}^{-1} \u(h,g).$$ So we can rewrite the homomorphism above as $$ \l \longmapsto \left \{ \prod_{g,h,a} ( w_h a {w_h}^{-1} )^{\langle \l , w_h x(aw_g) \rangle} \right \} \left \{ \prod_{g,h,a} \u(h,g)^{\langle \l , w_h x(aw_g) \rangle} \right \}.$$
	Now note that the first term is the image of a norm, so since we are working in the zeroth Tate cohomology group, we get that this homomorphism is in the same class as $$ \beta : \l \longmapsto  \prod_{g,h,a} \u(h,g)^{\langle \l , w_h x(aw_g) \rangle} =\prod_{g,h} \u(h,g)^{ \langle \l , h z(g) \rangle},$$ where $$z(g)=\sum_{a \in C_K} x(aw_g), \qquad \text{for all } g \in \G.$$
	Alternatively, if we take $x \in \ZW$ and go along the top of the square we have that $\coinf([x])=[z]$. So in order to show the square commutes we must show that $[[z] \cup [\u]] = [\beta]$.
	
	Now let $B=C_K$ and $A=\L$ in Proposition \ref{p9}, then we can take $f(h,g)=\u(h,g)$ so that $[\u]$ is the fundamental class, and we take $x$ to be $z$. Then Proposition \ref{p9} tell us that the class of $[z]\cup [\u]$ in $\H^{0}(\G, \L \otimes C_K)$ is $$\sum_{g,h \in \G} hz(g) \otimes \u(h,g),$$ which  maps to the homomorphism $$\beta: \l \longmapsto \prod_{g,h \in \G} \u(h,g)^{\langle \l , h z(g) \rangle}$$ in $\H^{0}(\G,\Hom(L,C_K))$  as required.
	
\end{proof}

Thus we have that the second square in diagram ({\bf{A}}) commutes. As we mentioned before, this now tells us that $\tr$ is surjective (this is just a simple diagram chase). We are left proving that $\tr$ is injective. 

\subsection{$\tr$ is injective}
Note that, from the commutativity of ({\bf A}), its enough to show that the kernel of the map $$\Cor: H_{1}(C_K,\L) \lra H_{1}(\W,\L)$$ is equal to the kernel of the map $$N_{\G}: H_{1}(C_K,L) \lra H_{1}(C_K,\L).$$ In other words, we want to show that the kernel of the corestriction consists precisely of the elements of norm zero. This is equivalent to showing that the the image of $$\Cor':\Hom(\HW,\QQ/\ZZ) \lra \Hom(\HH,\QQ/\ZZ),$$ consists of homomorphisms that vanish on elements of norm zero\footnote{Note that $\Hom(-,\QQ/ \ZZ)$ is an exact contravariant functor since $\QQ/ \ZZ$ is divisible.}. Here $\Cor'$ denotes the map induced by $\Cor$. Now using Proposition \ref{p3}, we have isomorphisms
\begin{align*}
\F': \Hom(\HW,\QQ/\ZZ) &  \overset{\sim}{\lra} H^{1}(\W, \Hom(\L, \QQ/\ZZ))\\
\F: \Hom(\HH,\QQ/\ZZ) &  \overset{\sim}{\lra} H^{1}(C_K,\Hom(\L,\QQ /\ZZ)).
\end{align*}
So, its enough to show that the image of $$ H^{1}(\W, \Hom(\L, \QQ/\ZZ)) \lra H^{1}(C_K,\Hom(\L,\QQ /\ZZ)),$$ consists of elements corresponding (under $\F$) to homomorphisms that vanish on elements of norm zero. In other words, we want to show that given $[\psi] \in H^{1}(C_K, \Hom(\L, \QQ/\ZZ))$, we can extend this to a $[\Psi] \in  H^{1}(\W, \Hom(\L, \QQ/\ZZ))$ if and only if $[\psi] = \F(\varphi)$, where $\varphi$ is a homomorphism vanishing on elements of norm zero. Following Langlands \cite[ p.13]{lang}, we reformulate this problem as follows:
recall from Proposition \ref{wex} we have the following exacts sequence $$0 \lra C_K \lra \W \overset{\s}\lra \Gal(K/F) \lra 0,$$ whose class in $H^{2}(\G,C_K)$ is $[\u]$ (the fundamental class). Also, we can use the action of  $\G$  on $\L$, to give $\Hom(\L,\QQ / \ZZ)$ a $\G$-action, by letting $\G$ act trivially on $\QQ / \ZZ$, and  we can form the semi-direct product $\Hom(\L, \QQ /\ZZ) \rtimes \G.$ Now suppose we have the following commutative diagram 
$$
\xymatrix@C=2em@R=2em{
	0 \ar[r] & C_K \ar[r] \ar[d]^{\psi} & \W  \ar[r]^{\s} \ar[d]^{\Psi} & \G \ar[r]\ar[d]^{id} & 0\\
	0 \ar[r]  & \Hom(\L, \QQ /\ZZ) \ar[r] &\Hom(\L, \QQ /\ZZ) \rtimes \G  \ar[r] & \G \ar[r] & 0. 
}
$$
We can define a 1-cochain $f$ by $\Psi(w) = f(w) \times \s(w)$ and, in fact,  $f \in Z^{1}(\W, \Hom(\L, \QQ/\ZZ))$. Conversely, given $f \in Z^{1}(\W, \Hom(\L, \QQ/\ZZ))$ we can define $\Psi = f \times \s$, such that $\Psi$ together with its restriction $\psi$  to $C_K$ will make the above diagram commute. Now what we want to prove is that given a homomorphism $$\psi: C_K \lra  \Hom(\L, \QQ /\ZZ),$$ we can extend this to a homomorphism $$\Psi: \W \lra \Hom(\L, \QQ /\ZZ) \rtimes \G$$ making the diagram commute if and only if $\psi$ corresponds (under $\F$) to a homomorphism $\varphi: H_{1}(C_K,\L) \to \QQ / \ZZ$ that vanishes on elements of norm zero. By \cite[Theorem 2]{artintate2}  $\psi$ will extend to $\Psi$ if and only is $\psi$ is $G$-invariant and\footnote{This is because the class representing a semi-direct product is zero.} $\psi_{*}([\u]) =0 $, where $\psi_{*}$ is the map induced by $\psi$.

Now, if $\psi: C_K \to  \Hom(\L, \QQ /\ZZ),$  corresponds (under $\F$) to  \[\varphi: H_{1}(C_K,\L) \to \QQ / \ZZ,\] then it is easy to see that for all $a \in C_K$ and $\wh{\l} \in \L$, we have $\left (\psi(a) \right) (\wh{\l}) = \varphi( \wh{\l} \otimes a)$, where we are using the fact that $H_{1}(C_K,\L) \cong \L \otimes C_K$. So we want to show that $\psi$ is $\G$-invariant if $\varphi$ vanishes on elements of norm zero. But, by the above, we see that $\psi$ is $\G$-invariant, if and only if for all $\wh{\l} \in \L$, $ a \in C_K$ and all $g \in \G$, we have $\psi(g \cdot a)(\wh{\l})=\psi(a)(g^{-1} \wh{\l}),$ which is true if and only if  $\varphi( \wh{\l}  \otimes ga) = \varphi( g^{-1} \wh{\l}  \otimes a).$ Now the latter will hold if for all $a \otimes \wh{\l} \in C_K \otimes \L$, we have $$g\cdot ( \wh{\l}  \otimes a)- (\wh{\l}  \otimes a) \in \ker(\varphi)$$ which is true if $\varphi$ vanishes on elements of norm zero. 

So it remains to prove that $\psi_{*}( [\u]) =0 $. For each pair of homomorphisms $\psi : C_K \to \Hom(\L,\QQ/\ZZ)$ and $\varphi: H_{1}(C_K,\L) \to \QQ / \ZZ$ with $\psi = \F(\varphi)$, we have the following commutative diagram$$
\xymatrix@C=4em@R=2em{
	\L \otimes C_K \ar[r]^-{id \otimes \psi} \ar[d] & \L \otimes \Hom(\L,\QQ / \ZZ) \ar[d] \\
	H_{1}(C_K,\L) \ar[r]^{\varphi} &  \QQ / \ZZ
}$$
where the first vertical arrow is the isomorphism (see \ref{p2}) that sends $\wh{\l} \otimes a$ to the 1-cycle that is zero except at $a$ where it is $\wh{\l}$, for $\wh{\l} \in \L$ and $a \in C_K$. Now, if $\psi$ (and hence $\varphi$) is $\G$-invariant we get the following commutative diagram:$$
\xymatrix@C=4em@R=2em{
	\H^{-3}(\G,\L) \otimes \H^{2}(\G,C_K) \ar[r]^-{id \otimes \psi_{*}} \ar[d]^{\mu} & \H^{-3}(\G,\L) \otimes \H^{2}(\G,\Hom(\L,\QQ / \ZZ)) \ar[d]^{\nu} \\
	\H^{-1}(\G,H_{1}(C_K,\L)) \ar[r]^{\varphi'} &  \H^{-1}(\G,\QQ / \ZZ),
}$$
where the vertical arrows are given by taking cup products, the map $\varphi'$ is induced by $\varphi$. Note that for $\g \in \H^{-3}(\G,\L)$, the map $\g \to \mu(\g \otimes [\u]),$ gives an isomorphism $$\H^{-3}(\G,\L) \lra \H^{-1}(\G,H_{1}(C_K,\L)).$$ This is the isomorphism given by taking cup-product with the fundamental class $[\u]$. Similarly, for $\beta \in \H^{-3}(\G,\L)$ and $\g \in  \H^{2}(\G,\Hom(\L,\QQ / \ZZ))$  the map $\beta \to \nu(\beta \otimes \g)$ is an isomorphism for all $\g \in  \H^{2}(\G,\Hom(\L,\QQ / \ZZ))$. For a proof of this see \cite[Corollary 7.3]{brown} and note that in this case, both  $\H^{-3}(\G,\L)$ and  $\H^{2}(\G,\Hom(\L,\QQ / \ZZ))$ are finite groups  (see \ref{p11}). So, from the diagram, we see that $\psi_{*}([\u]) = 0 $ if and only if for all $\beta \in \H^{-3}(\G,\L)$ $$\nu(\beta \otimes \psi_{*}([\u]))=0.$$  Now going around the diagram in the other direction we see that, since the map $\g \to \mu(\g \otimes [\u])$ is an isomorphism, we have that $\psi_{*}([\u])=0$ if and only if $\varphi'$ is the zero map. This, by definition of $\varphi$ and $\H^{-1}$, is true if and only if $\varphi$ vanishes on elements of norm zero. This now completes the proof that $\tr$ is injective and thus we have proven Proposition \ref{t1p1}.

\subsection{Continuity} 

Note here that by using Proposition \ref{p2}, we can give $H_{1}(C_K,\L)$ a topology by using the natural topology on $\Hom(L,C_K)$, and consequently we get a topology on $\HW$ by using the fact that $\tr$ is an isomorphism. Our goal now is to prove the following:

\begin{prop}\label{continu}
	If\/  $x \in Z^{1}(\W,\wh{T}_D)$, then \[\Psi([x])  \in \Hom_{cts}(\Hom_{\G}(L,C_K), D)\] if and only if\/ $x$ is a continuous 1-cocycle.
\end{prop}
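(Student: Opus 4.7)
My plan is to derive an explicit formula for $\Psi([x])$ in terms of $x|_{C_K}$, valid on a sufficiently large open subgroup of $\Hom_\G(L,C_K)$, from which both directions of the equivalence will follow.

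Set $M = \Hom(L,C_K) \cong C_K^n$ for a fixed $\ZZ$-basis $\ell_1,\dots,\ell_n$ of $L$ with dual basis $\wh{\ell}_1,\dots,\wh{\ell}_n$ of $\L$, so that $\Hom_\G(L,C_K) = M^\G$ is a closed subgroup, and let $N\colon M\to M^\G$, $\phi \mapsto \sum_{g\in\G} g\phi$, be the norm. First I would observe that $x$ is continuous if and only if its restriction $x|_{C_K}\colon C_K\to\wh{T}_D$ is continuous: since $C_K$ is open of finite index in $\W$ with discrete quotient $\G$, and since $x|_{C_K}$ is a group homomorphism as $C_K$ acts trivially on $\wh{T}_D$. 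Under the standard identifications $\Hom(C_K,\wh{T}_D) = \Hom(C_K\otimes\L,D) = \Hom(M,D)$, this restriction corresponds to the homomorphism $\Res([x])\colon \phi\mapsto \sum_{i=1}^n x(\phi(\ell_i))(\wh{\ell}_i)$, which is continuous iff $x$ is.

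Next I would establish the key identity $\Psi([x])(N\phi_0) = \Res([x])(\phi_0)$ for every $\phi_0\in M$. Tracing through Proposition~\ref{p2}, the 1-cycle $y_{N\phi_0}$ attached to $N\phi_0$ satisfies $[y_{N\phi_0}] = N_\G[y_{\phi_0}]$ in $H_1(C_K,\L)$ via the $\G$-equivariant isomorphism $H_1(C_K,\L)\cong C_K\otimes\L\cong M$. By Proposition~\ref{p51}, $N_\G[y_{\phi_0}] = \tr(\Cor[y_{\phi_0}])$, so $\tr^{-1}[y_{N\phi_0}] = \Cor[y_{\phi_0}] = [\wt{y}_{\phi_0}]$, where $\wt{y}_{\phi_0}$ is the extension of $y_{\phi_0}$ by zero. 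Pairing with $[x]$ via Remark~\ref{rmkiso} then yields
\[
\Psi([x])(N\phi_0) = \sum_{a\in C_K} x(a)(y_{\phi_0}(a)) = \sum_{i=1}^n x(\phi_0(\ell_i))(\wh{\ell}_i) = \Res([x])(\phi_0),
\]
well-defined independently of the lift $\phi_0\in M$.

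Finally I would show that $N(M)$ is an open subgroup of finite index in $M^\G$. Finite index follows from Tate--Nakayama as in diagram~\textbf{A}: the cokernel $M^\G/N(M) \cong \H^{-2}(\G,\L) = H_1(\G,\L)$ is a finite group. Openness follows from $N(M) \supseteq N(M^\G) = |\G|\cdot M^\G$, together with the fact that $|\G|\cdot C_K$ is open in $C_K$ in both the local and global cases (since any closed finite-index subgroup of a Hausdorff topological group is open). The open-mapping theorem, applied to the continuous surjection $N\colon M\twoheadrightarrow N(M)$ between locally compact $\sigma$-compact topological groups, then shows $N$ is a topological quotient; hence $\Psi([x])|_{N(M)}$ is continuous if and only if $\Psi([x])\circ N = \Res([x])$ is, if and only if $x$ is continuous. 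Since $N(M)$ is an open finite-index subgroup of $M^\G$, continuity of the homomorphism $\Psi([x])$ on $N(M)$ extends across the finitely many open cosets (by translation) to continuity on all of $M^\G$. The main technical obstacle in executing this plan is establishing the openness of $N(M)$ in $M^\G$, which hinges on the topological structure of $C_K$ in the local and global settings.
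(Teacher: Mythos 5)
Your overall strategy mirrors the paper's: you reduce to the continuity of the restriction to $C_K$ (the paper's Proposition~\ref{4.16}), identify $\Psi([x])\circ N_\G$ with $\Res([x])$ via the commutative square $\tr\circ\Cor = N_\G$ and the pairing of Remark~\ref{rmkiso}, and then try to show $N_\G\bigl(\Hom(L,C_K)\bigr)$ is open of finite index in $\Hom_\G(L,C_K)$ (the paper's Proposition~\ref{p10}). The finite-index claim via Tate--Nakayama and Lemma~\ref{fg}, and the identity $\Psi([x])(N\phi_0)=\Res([x])(\phi_0)$, are both fine.

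The gap is in your openness argument. You assert that $|\G|\cdot C_K$ is a closed, finite-index (hence open) subgroup of $C_K$ ``in both the local and global cases.'' This is false when $K$ is global. Write $C_K = U_K\times M_K$ with $U_K$ compact and $M_K=\ZZ$ or $\RR^{>0}$. Then $m\cdot C_K$ contains $U_K^m \times m M_K$, and $U_K^m$ has \emph{infinite} index in $U_K$ for $m>1$: e.g.\ for $K=\QQ$ one has $U_\QQ\cong\wh{\ZZ}^\times=\prod_p\ZZ_p^\times$, and $\prod_p\ZZ_p^\times/(\ZZ_p^\times)^2$ is an infinite product of nontrivial groups. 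So $U_K^m$ is compact, hence closed, but cannot be open (an open subgroup of a compact group has finite index). Consequently $|\G|\cdot C_K$ is not open in $C_K$, your inclusion $N(M)\supseteq |\G|\cdot M^\G$ does not by itself yield openness of $N(M)$, and the chain of reductions breaks. This is precisely the technical obstacle you flag at the end, and the ``closed finite-index $\Rightarrow$ open'' shortcut does not resolve it.

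The paper instead proves that $N_\G(\Hom(L,C_K))$ is \emph{closed} in $\Hom_\G(L,C_K)$ by a case split on $M_K$. It uses compactness of $U_K$ (so that $N_\G(\Hom(L,U_K))$ is compact, hence closed), an auxiliary injection (Proposition~\ref{p13}) of $\bigl(N_\G(\Hom(L,C_K))\cap\Hom(L,U_K)\bigr)/N_\G(\Hom(L,U_K))$ into a finite group built from $\H^{-1}$'s, Lemma~\ref{buzz} to pass closedness from an open subgroup up to the whole group in the $\ZZ$-case, and the divisibility of $\RR$ (Proposition~\ref{p15}) in the $\RR$-case. Only then is ``closed + finite index $\Rightarrow$ open'' applied, and at that point the finite-index input is genuinely Proposition~\ref{p11}, not an index computation for $|\G|\cdot C_K$. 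To repair your proof you would need to replace the $|\G|\cdot C_K$ step with an argument of this kind.
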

The proof will require several results. We begin with some basic lemmas.

\begin{lem}{\label{fg}} If $M$ is a finitely generated $G$-module and $G$ is a finite group, then $\wh{H}^{i}(G,M)$ is a finite group.

\end{lem}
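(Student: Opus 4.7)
The plan is to combine two classical facts about Tate cohomology of a finite group $G$: (i) $\wh{H}^i(G,M)$ is annihilated by $|G|$, and (ii) it is a finitely generated abelian group. A finitely generated abelian group of bounded exponent is finite, which yields the lemma.

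For (i), I would invoke the standard identity $\Cor \circ \Res = [G:1] \cdot \mathrm{id}$ on every $\wh{H}^i(G, M)$, where restriction is taken to the trivial subgroup. Since $\wh{H}^i(\{1\}, M) = 0$ for all $i \in \ZZ$, the composition is the zero map, so multiplication by $|G|$ acts as zero on $\wh{H}^i(G,M)$. (Alternatively, the claim is trivial in degree $0$ because $|G| \cdot m = N_G(m)$ for $m \in M^G$, and dimension shifting propagates it to every degree.)

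For (ii), the key input is that $\ZZ$ admits a complete resolution $P_\bullet$ in which each $P_n$ is a finitely generated free $\ZZ[G]$-module. This is routine for finite $G$: take a finitely generated free $\ZZ[G]$-resolution of $\ZZ$ in nonnegative degrees, splice it with its $\ZZ$-linear dual (which is again a resolution by finitely generated free $\ZZ[G]$-modules, since $\ZZ[G]$ is self-dual as a $\ZZ[G]$-module for finite $G$), and verify exactness in the middle. Because $G$ is finite, $\ZZ[G]$ is finitely generated over $\ZZ$, and therefore the hypothesis that $M$ is a finitely generated $G$-module forces $M$ to be finitely generated over $\ZZ$. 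Each $\Hom_{\ZZ[G]}(P_n, M)$ then sits inside the finitely generated abelian group $\Hom_{\ZZ}(P_n, M)$, so is itself finitely generated; hence $\wh{H}^i(G,M)$, being a subquotient of one of these groups in the cochain complex $\Hom_{\ZZ[G]}(P_\bullet, M)$, is finitely generated as an abelian group.

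Putting (i) and (ii) together, $\wh{H}^i(G,M)$ is a finitely generated abelian group killed by $|G|$, and is therefore finite. The only mildly delicate ingredient is the existence of a complete resolution by finitely generated free $\ZZ[G]$-modules; this is guaranteed by the finiteness of $G$ and can be cited from a standard homological algebra reference such as \cite{brown}.
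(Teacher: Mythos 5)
Your argument is correct and is the standard proof of this fact: kill $\wh{H}^i(G,M)$ by $|G|$ via $\Cor\circ\Res=|G|\cdot\mathrm{id}$, show it is finitely generated as an abelian group via a complete resolution of $\ZZ$ by finitely generated free $\ZZ[G]$-modules (available because $G$ is finite), and conclude since a finitely generated abelian group of bounded exponent is finite. The paper itself gives no argument and simply cites Weiss, \emph{Cohomology of Groups}, Proposition (3-1-9), so your proof supplies exactly the content that reference would provide; there is nothing to compare beyond that.
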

\begin{proof}
	See \cite[ Proposition (3-1-9)]{weiss}.
\end{proof}

\begin{lem}\label{buzz}
	Let $A$ be a topological group and let $H$ and $S$ be subgroups of $A$ with $H$ open in $A$. If\/ $H \cap S$ is closed in $H$ then $S$ is closed in $A$.

\end{lem}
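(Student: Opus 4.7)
The plan is to show that the complement $A \setminus S$ is open, by producing, for each $a \in A \setminus S$, an open neighborhood of $a$ that avoids $S$.

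First I would consider the open coset $aH$, which is open since $H$ is open in $A$ and translation is a homeomorphism. There are two cases. If $aH \cap S = \emptyset$, then $aH$ itself is the desired open neighborhood and we are done. Otherwise pick some $s \in aH \cap S$; then $aH = sH$, and a short coset computation shows
\[
sH \cap S \;=\; s(H \cap S),
\]
since $x \in sH \cap S$ forces $s^{-1}x \in H \cap S$, and conversely every element of $s(H \cap S)$ lies in both $sH$ and $S$.

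Now the hypothesis that $H \cap S$ is closed in $H$, together with the fact that left translation by $s$ is a homeomorphism of $A$ that maps $H$ onto $sH$, shows that $s(H \cap S)$ is closed in $sH$. Hence $sH \setminus s(H \cap S) = sH \setminus (sH \cap S)$ is open in $sH$, and since $sH$ is open in $A$, it is open in $A$. This open set contains $a$ (because $a \in sH = aH$ and $a \notin S$, so in particular $a \notin sH \cap S$), and by construction it is disjoint from $S$. This exhibits the required neighborhood of $a$, so $A \setminus S$ is open and $S$ is closed.

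The only subtlety is the well-known pitfall that ``closed in an open subspace'' does not in general imply ``closed in the ambient space''. The point of the argument is that we do not try to globalize the closedness of $s(H \cap S)$ directly; rather, we use it only to produce a small open neighborhood of the given point $a$, which is all that is needed to verify openness of $A \setminus S$ pointwise.
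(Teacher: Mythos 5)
Your proof is correct. The paper itself offers no argument here (it just says ``This is elementary''), so there is nothing to compare against, but your argument is exactly the standard one: for $a \notin S$ the open coset $aH$ either misses $S$ entirely, or equals $sH$ for some $s \in S$, in which case the subgroup identity $sH \cap S = s(H\cap S)$ reduces the question to closedness of $H\cap S$ in $H$, translated by the homeomorphism $x \mapsto sx$. You also correctly use the fact that an open subset of an open subspace is open in the ambient space, which is where the openness of $H$ enters. Your closing remark identifies the right subtlety — one must not try to conclude directly that $H\cap S$ is closed in $A$ — and the pointwise argument sidesteps it cleanly. One could alternatively note that $H$, being an open subgroup, is also closed, so $H\cap S$ is closed in $A$, and then cover $S$ by translates of $H\cap S$; but that route still needs a local argument to handle the (possibly infinite) union, so your version is at least as economical.
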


\begin{proof} This is elementary.
	
	%Since $H$ is a subgroup of $A$ we can write it as the disjoint union of the cosets $ \{ Ha_i \mid i \in I \}$ for some  indexing set $I$. Here each coset is open since $H$ is open and therefore they are all closed as well. Hence as a topological space, $A$ is the disjoint union of these topological spaces $Ha_i$. So in order to prove that $S$ is closed in $A$ it suffices to prove that $S \cap Ha_i$ is closed in $Ha_i$ for all $i \in I$. 
	
	%First assume that $S \cap Ha_i \neq \emptyset$, then we can pick $\s \in S \cap Ha_i$, and we can write $Ha_i=H \s$. Now since we are in a topological group the map given by multiplication by $\s$ is a homeomorphism, we can apply this to $S \cap H$ to get that $S\s \cap H \s = S \cap H\s$ is closed in $H \s$ and hence the result. If  $S \cap Ha_i = \emptyset$, then $S \cap Ha_i$ is closed in $Ha_i$, since the empty set is always closed.
	
\end{proof}

With this we now have the following proposition:

\begin{prop}\label{p11}
	For all $i \in \ZZ$, $\wh{H}^{i}(\G,\Hom(L,C_K))$ is a finite group.

\end{prop}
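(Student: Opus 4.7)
The plan is to deduce this immediately from the Tate--Nakayama isomorphism (already invoked in the paper to build diagram ({\bf A})) together with Lemma \ref{fg}, using the identification of $\Hom(L,C_K)$ with $\wh{L}\otimes C_K$ provided by Proposition \ref{p2}.

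First, I would invoke Proposition \ref{p2} to identify $\Hom(L,C_K)\cong H_1(C_K,\wh{L})\cong \wh{L}\otimes_{\ZZ} C_K$ as $\G$-modules. This reduces the problem to showing that $\wh{H}^{i}(\G,\wh{L}\otimes_{\ZZ} C_K)$ is finite for every $i\in\ZZ$.

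Next, I would apply the Tate--Nakayama lemma with the module $\wh{L}$: the hypotheses hold (as noted in the discussion preceding diagram ({\bf A})) since class field theory supplies the required vanishing on the cohomology of $C_K$, and $\Tor_1^{\ZZ}(\wh{L},C_K)=0$ because $\wh{L}$ is free. Cup product with the fundamental class $[\u]\in H^{2}(\G,C_K)$ therefore yields an isomorphism
\[
\wh{H}^{i-2}(\G,\wh{L}) \overset{\sim}{\lra} \wh{H}^{i}(\G,\wh{L}\otimes_{\ZZ} C_K)
\]
for every $i\in\ZZ$.

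Finally, since $\wh{L}=\Hom(L,\ZZ)$ is a finitely generated $\ZZ$-module and $\G$ is finite, Lemma \ref{fg} tells us that $\wh{H}^{i-2}(\G,\wh{L})$ is finite for every $i$. Combined with the Tate--Nakayama isomorphism of the previous step and the $\G$-isomorphism from Proposition \ref{p2}, this gives the finiteness of $\wh{H}^{i}(\G,\Hom(L,C_K))$ for all $i\in\ZZ$. I do not anticipate a real obstacle here, since the proposition is essentially a packaging of results already established; the only point requiring care is to confirm that the Tate--Nakayama hypotheses are met for the pair $(\wh{L},C_K)$, which the paper has already flagged.
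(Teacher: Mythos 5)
Your argument is correct and is essentially identical to the paper's own proof: both use the $\G$-isomorphism $\Hom(L,C_K)\cong\wh{L}\otimes C_K$, then the Tate--Nakayama cup-product isomorphism to shift to $\wh{H}^{i-2}(\G,\wh{L})$, and finally Lemma~\ref{fg} applied to the finitely generated $\ZZ[\G]$-module $\wh{L}$. No gaps.
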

\begin{proof}
	
	Since we have a $\G$-module isomorphism between $\Hom(L,C_K)$ and $\wh{L} \otimes C_K$, it is enough to prove that $\wh{H}^{i}(\G,\wh{L} \otimes C_K)$ is a finite group. Now from the Tate--Nakayama Lemma we have that for all $i \in \ZZ$ $$\H^{i}(\G,\L) \cong \H^{i+2}(\G,\L \otimes C_K).$$  So we can reduce the problem to showing that $\H^{i}(\G,\L)$ is finite. But since $L$ is a finitely generated $\ZZ$-module, $\L$ will also be a finitely generated $\ZZ$-module and consequently $\L$ will also be a finitely generated $\ZZ[\G]$-module, so  Lemma \ref{fg} applies, giving the result.

\end{proof}

Next we have the following key result.
\begin{prop}\label{p10}
	A homomorphism $\a: \Hom_{\G}(L,C_K) \to D$ is continuous if and only if $\a \circ N_{\G}$ is continuous.
\end{prop}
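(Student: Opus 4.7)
The direction $(\Rightarrow)$ is immediate because $N_{\G}$ is itself a finite sum of continuous Galois translates, hence continuous; composing with continuous $\alpha$ preserves continuity.

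For $(\Leftarrow)$, set $H := N_{\G}(\Hom(L,C_K)) \subseteq \Hom_{\G}(L,C_K)$. Proposition \ref{p11} yields that the quotient $\Hom_{\G}(L,C_K)/H = \H^0(\G,\Hom(L,C_K))$ is finite, so $H$ has finite index in $\Hom_{\G}(L,C_K)$. The plan then reduces to two topological claims: first, that $N_{\G}\colon\Hom(L,C_K)\to H$ is a continuous open surjection, and second, that $H$ is open in $\Hom_{\G}(L,C_K)$.

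Granting these claims, the conclusion follows in two steps. From the openness of $N_{\G}$ onto $H$, continuity of $\alpha\circ N_{\G}$ transfers to continuity of $\alpha|_H$, since $(\alpha|_H)^{-1}(V) = N_{\G}((\alpha\circ N_{\G})^{-1}(V))$ is open in $H$ for any open $V\subseteq D$. From the openness of $H$ in $\Hom_{\G}(L,C_K)$, continuity of $\alpha|_H$ extends to continuity of $\alpha$ on the whole group: each coset $y+H$ is open, and on it $\alpha$ is a translate (in the target $D$) of $\alpha|_H$, hence continuous.

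The key technical step is the second claim, for which I would instead show that $H$ is \emph{closed} in $\Hom_{\G}(L,C_K)$, because a closed subgroup of finite index in any topological group is automatically open (its complement being a finite union of closed cosets). Lemma \ref{buzz} is designed exactly for this purpose: it suffices to produce an open subgroup $U \subseteq \Hom(L,C_K)$ with $U\cap H$ closed in $U$, and the local compactness of $C_K$ (and hence of $\Hom(L,C_K) \cong C_K^{\operatorname{rank} L}$) makes such a $U$ available. The openness of $N_{\G}$ onto its image likewise reflects the standard class-field-theoretic fact that norm subgroups are open and that the norm map is open onto its image. The principal obstacle is precisely these topological claims: the algebraic finiteness from Proposition \ref{p11} supplies the bridge between $\alpha$ and $\alpha\circ N_{\G}$, but extracting actual openness and closedness requires careful use of the locally compact structure of $C_K$ together with Lemma \ref{buzz}.
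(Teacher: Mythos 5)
Your reduction matches the paper's: invoke Proposition~\ref{p11} to get finite index, argue that a closed finite-index subgroup is open, and aim to show $H := N_{\G}(\Hom(L,C_K))$ is closed. You also make explicit something the paper treats silently, namely that passing from continuity of $\alpha\circ N_{\G}$ to continuity of $\alpha|_H$ requires $N_{\G}$ to be open onto $H$; this is a genuine step, but note the logical order: one first proves $H$ closed (hence open, hence locally compact Hausdorff), and only then is the open mapping theorem for $\sigma$-compact locally compact groups available to give openness of $N_{\G}\colon\Hom(L,C_K)\to H$. Your phrasing presents the two ``claims'' as if they were independent inputs, and the appeal to ``norm subgroups are open'' from class field theory is not the right justification here — $N_\G$ is the $\ZZ[\G]$-norm on $\Hom(L,C_K)$, not the idelic norm, and its openness onto $H$ is a consequence of, not a substitute for, the closedness of $H$.

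The real gap is in the sketch of closedness. You propose applying Lemma~\ref{buzz} with an open subgroup $U\subseteq\Hom_\G(L,C_K)$ supplied by ``local compactness,'' and the natural candidate is $U=\Hom_{\G}(L,U_K)$ with $U_K$ the maximal compact of $C_K$. But when $K$ is a number field, $C_K/U_K\cong\RR^{>0}$ is connected, so $U_K$ is \emph{not} open in $C_K$ and $\Hom_{\G}(L,U_K)$ is not open in $\Hom_{\G}(L,C_K)$; Lemma~\ref{buzz} simply does not apply with this choice of $U$. The paper avoids this by splitting into the ``$\ZZ$-case'' (local fields and global function fields, where $C_K/U_K\cong\ZZ$ is discrete and $U_K$ is open) and the ``$\RR$-case'' (number fields, where the sequence $1\to U_K\to C_K\to\RR^{>0}\to1$ splits as $\G$-modules and one uses Proposition~\ref{p15} to dispose of the $\RR$-factor outright, never touching Lemma~\ref{buzz}). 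Moreover, even in the $\ZZ$-case the hypothesis of Lemma~\ref{buzz} — that $H\cap\Hom_{\G}(L,U_K)$ is closed — is not free: the paper's Proposition~\ref{p13} is needed to show $N_{\G}(\Hom(L,U_K))$ has finite index in $H\cap\Hom(L,U_K)$, which combined with compactness of $\Hom(L,U_K)$ gives closedness. Your outline collapses all of this into ``local compactness makes such a $U$ available,'' which both misses the number-field obstruction and omits the finiteness input that makes the compactness argument work.
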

In order to prove this, it suffices to prove that $N_{\G}(\Hom(L,C_K))$ (the image under $N_{\G}$) is an open subgroup of $\Hom_{\G}(L,C_K)$, since a homomorphism of topological groups is continuous if and only if it is continuous at the identity. It follows that a homomorphism will be continuous on $\Hom_{\G}(L,C_K)$ if and only if it continuous on an open subgroup.  In particular, using this result and Proposition \ref{p2}, we may replace $\Hom_{\G}(L,C_K)$ with $\Hom(L,C_K) \cong H_1(C_K,\L)$ in Proposition \ref{continu}.  

Applying Proposition \ref{p11} with $i=0$, gives that $N_{\G}(\Hom(L,C_K))$ has finite index in $\Hom_{\G}(L,C_K)$. Therefore in order to prove Proposition \ref{p10}  it suffices to prove that  $N_{\G}(\Hom(L,C_K))$ is closed in $\Hom_{\G}(L,C_K)$, since any closed subgroup of finite index is automatically open. Now observe that if $K$ is a local field or a global function field, then we have a natural homomorphism from $C_K$ into $\ZZ$, whose kernel $U_K$ is known to be compact. Similarly, if $K$ is a  number field, then there is a natural map from $C_K$ to $\RR^{>0}$, whose kernel we once again denote by $U_K$ and is also compact (see \cite[Theorem 1.6]{neukalg}). With this we can form the exact sequence of abelian groups \begin{equation} 1 \lra U_{K} \lra C_{K} \lra M_K \lra 1, \tag{$*$} \end{equation} where we set $M_K=\ZZ$ or $M_K=\RR^{>0} \cong \RR$ accordingly, and we call the two cases the ``$\ZZ$-Case'' and  ``$\RR$-Case'' respectively.

\subsubsection{\bf $\ZZ$-Case} Since $L$ is a free $\ZZ$-module (hence projective) we can use $(*)$ to form the exact sequence 
$$
0 \lra \Hom(L,U_K) \overset{\lambda}\lra \Hom(L,C_K) \overset{\mu}\lra \Hom(L,M_K) \lra 0,
$$
where we think of this as a sequence of $\G$-modules by giving  $M_K$ the trivial action. Note that in the $\ZZ$-case we have  $$\H^{i}(\G,\Hom(L,M_K))=\H^{i}(\G,\L), \qquad \text{for all }i \in \ZZ$$ and Lemma \ref{fg} tells us that all of these groups are finite.

\begin{prop}\label{p13} There is an injective map $\psi$, from  \[\left ( N_{\G}(\Hom(L,C_K)) \cap \Hom(L,U_K) \right ) /N_{\G}(\Hom(L,U_K))\] to \[\wh{H}^{-1}(\G,\Hom(L,M_K))/ \mu \wh{H}^{-1}(\G,\Hom(L,C_K)).\]
\end{prop}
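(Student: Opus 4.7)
The plan is to construct $\psi$ as (the inverse of) a connecting homomorphism in the long exact Tate cohomology sequence attached to
\[0 \lra \Hom(L,U_K) \overset{\lambda}\lra \Hom(L,C_K) \overset{\mu}\lra \Hom(L,M_K) \lra 0.\]
Given a representative $\alpha \in N_{\G}(\Hom(L,C_K)) \cap \Hom(L,U_K)$, I would first pick some $\beta \in \Hom(L,C_K)$ with $N_{\G}(\beta) = \alpha$. Applying $\mu$ gives $N_{\G}(\mu(\beta)) = \mu(N_{\G}(\beta)) = \mu(\alpha) = 0$, the last equality because $\alpha$ lies in $\lambda(\Hom(L,U_K)) = \ker\mu$. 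Thus $\mu(\beta)$ is a norm-zero element of $\Hom(L,M_K)$ and so determines a class $[\mu(\beta)] \in \wh{H}^{-1}(\G,\Hom(L,M_K))$. I would then define
\[\psi\bigl(\alpha + N_{\G}(\Hom(L,U_K))\bigr) := [\mu(\beta)] + \mu_{*}\wh{H}^{-1}(\G,\Hom(L,C_K)).\]

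Next I would verify well-definedness in two independent steps. If $N_{\G}(\beta) = N_{\G}(\beta')$, then $\beta - \beta'$ lies in $\ker N_{\G}$ inside $\Hom(L,C_K)$, so $[\mu(\beta)] - [\mu(\beta')] = \mu_{*}([\beta - \beta'])$ vanishes modulo $\mu_{*}\wh{H}^{-1}(\G,\Hom(L,C_K))$. If instead I change the representative $\alpha$ by an element $N_{\G}(\gamma)$ with $\gamma \in \Hom(L,U_K)$, I may replace $\beta$ by $\beta + \lambda(\gamma)$; since $\mu \circ \lambda = 0$, this leaves $\mu(\beta)$ unchanged. Both checks are essentially formal manipulations with the snake lemma.

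For injectivity I would assume $\psi([\alpha]) = 0$, meaning $[\mu(\beta)] = \mu_{*}[\zeta]$ in $\wh{H}^{-1}(\G,\Hom(L,M_K))$ for some $\zeta \in \Hom(L,C_K)$ with $N_{\G}(\zeta) = 0$. Then $\mu(\beta - \zeta) = \sum_{i}(g_i-1)m_i$ for some $g_i \in \G$ and $m_i \in \Hom(L,M_K)$. Lifting each $m_i$ to some $c_i \in \Hom(L,C_K)$ via $\mu$ and setting $\beta'' := \beta - \zeta - \sum_i (g_i-1)c_i$, I find $\mu(\beta'') = 0$, so by exactness $\beta'' = \lambda(u)$ for some $u \in \Hom(L,U_K)$. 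Since $N_{\G}(\zeta) = 0$ and $N_{\G}((g_i-1)c_i) = 0$, applying $N_{\G}$ yields $\alpha = N_{\G}(\beta) = \lambda(N_{\G}(u))$, i.e. $\alpha \in N_{\G}(\Hom(L,U_K))$, so $[\alpha] = 0$.

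I expect no real obstacle: the whole statement is a repackaging of exactness for the Tate cohomology long exact sequence, and in fact the argument shows that $\psi$ identifies the source with $\ker\bigl(\lambda_{*}:\wh{H}^{0}(\G,\Hom(L,U_K)) \to \wh{H}^{0}(\G,\Hom(L,C_K))\bigr)$, which will be useful in the sequel when combined with the finiteness from Lemma \ref{fg} to deduce that $N_{\G}(\Hom(L,C_K))$ has finite index. The only point requiring care is keeping track of the sign conventions and the fact that representatives in $\wh{H}^{-1}$ must be chosen inside $\ker N_{\G}$, which is automatic here from $\mu(\alpha) = 0$.
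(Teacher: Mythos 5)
Your argument is correct and, at the level of elements, reproduces the paper's verification: choose $\beta$ with $N_{\G}(\beta)=\alpha$, set $\psi([\alpha]) = [\mu(\beta)]$ modulo $\mu_*\wh{H}^{-1}(\G,\Hom(L,C_K))$, check independence of the choices, and invert the coboundary expression for injectivity. What you do differently is to organize the whole thing as (the inverse of) the connecting homomorphism $\delta : \wh{H}^{-1}(\G,\Hom(L,M_K)) \to \wh{H}^{0}(\G,\Hom(L,U_K))$ in the long exact Tate cohomology sequence, noting that the source is exactly $\ker\bigl(\lambda_*:\wh{H}^{0}(\G,\Hom(L,U_K))\to\wh{H}^{0}(\G,\Hom(L,C_K))\bigr)$; this buys you the stronger statement that $\psi$ is an isomorphism onto $V$ (not merely injective), at no extra cost. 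You are also slightly more careful than the paper at the injectivity step: the paper passes directly from $\psi(z)=0$ to $\mu(x)\in I_\G(\Hom(L,M_K))$, silently using the freedom to replace $x$ by $x-\zeta$ for a norm-zero $\zeta$ afforded by the independence-of-$x$ remark, whereas you make this correction explicit via $\beta'' = \beta - \zeta - \sum_i(g_i-1)c_i$, which is the cleaner way to write it.
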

In order to ease notation in the proof, we set  \[B= N_{\G}(\Hom(L,C_K)) \cap \Hom(L,U_K)\] and \[V=\wh{H}^{-1}(\G,\Hom(L,M_K))/ \mu \wh{H}^{-1}(\G,\Hom(L,C_K)).\] Note that once we have this result, it will follow that$N_{\G}(\Hom(L,U_K))$ has finite index in $B$, since $V$ is finite by the comment above.

\begin{proof}
	We begin by taking  $x \in \Hom(L,C_K)$  such that $z=N_{\G}(x) \in \Hom(L,U_K)$.  If we let $y= \mu(x)$ with $\mu$ as above, then by exactness and the fact that $\mu$ is a homomorphism we have $$N_{\G}(y)=N_{\G}(\mu(x))=\mu(N_{\G}(x))=0.$$
	
	We claim there is a well-defined map $\psi$, such that $\psi(z)= \overline{y}$ where $\overline{y}$ is the image of $y$ in $V$. Note that if $x \in \Hom(L,U_K)$, then $\overline{y}$ will be zero. To prove this claim, observe that the image of $y$ will be independent of $x$ since on the right we quotient out by $\mu \wh{H}^{-1}(\G,\Hom(L,C_K))$. Therefore, if we had $z=N_{\G}(x)=N_\G(x'),$ then letting $x-x'=r$, we would have $N_\G(r)=0$ and $$y=\mu(x)=\mu(x')+\mu(r)=y'+\mu(r).$$ So when we look at $\overline{y}$ and $\overline{y}'$  we can clearly see they will represent the same element in $V$, hence $\psi$ is well-defined. 
	
	In order to show the $\psi$ is injective, it suffices to show that if $\psi(z)=0$ for $z=N_\G(x)$, and $x \in \Hom(L,C_K)$, then we can chose an $x' \in \Hom(L,U_K)$ such that $N_\G(x)=N_\G(x')$. So suppose that $\psi(N_{\G}(x))=\psi(z)=0,$ then we have $\mu(x)=y \in I_\G(\Hom(L,M_K))$ (by definition of $\H^{-1}$), hence $$y = \sum_{g} (g^{-1}-1)v_{g}, \qquad \text{ for some } v_{g} \in \Hom(L,M_K).$$Now since $\mu$ is surjective we can pick $u_g \in \Hom(L,C_K)$, such that $\mu(u_g)=v_g$ and we can also pick $x \in \Hom(L,C_k)$ such that $\mu(x)=y$. Now consider $$x'=x-\sum_{g} (g^{-1}-1)u_g,$$ it must lie in $\Hom(L,U_K)$ since $\mu(x')=\mu(x)-\sum_{g} (g^{-1}-1)v_g=0$, but we also have $N_\G(x)=N_\G(x')$, so we are done.
\end{proof}
With this result we can now show that in the $\ZZ$-case, $ N_\G(\Hom(L,C_K))$ is closed in $\Hom_{\G}(L,C_K)$. First note that $L \cong \ZZ^n$ as abelian groups (for some $n \in \ZZ_{ \geq 0}$), and  since $U_K$ is compact and Hausdorff, then $\Hom(L,U_K) \cong (U_{K})^n$ is also compact and Hausdorff  (being the direct sum of compact and  Hausdorff groups). Also since $N_\G$ is a continuous map, we have that $N_\G(\Hom(L,U_K))$ must be closed in $\Hom(L,U_K)$  (being a compact subgroup of a Hausdorff group). Therefore, since  $B$ is a subgroup of $\Hom(L,U_K)$ and $N_\G(\Hom(L,U_K))$ is closed in $\Hom(L,U_K)$, we must have that $N_\G(\Hom(L,U_K))$ is also closed in $B$. We also know $N_\G(\Hom(L,U_K))$ is of finite index in $B$. Therefore we can write $$B=\bigcup\limits_{k=1}^n  b_k N_{\G}(\Hom(L,U_K))$$ for some $b_k \in B,$ and it follows that $B$ is closed in $\Hom(L,U_K)$. Now recall that a normal subgroup $H$  of a topological group $G$, is open if and only if the quotient topology on $G/H$ is discrete. So since $M_K=\ZZ$ we must have that $U_K$ is an open subgroup of $C_K$ since we know that $U_K$ is closed in $C_K$ and  $C_K/ U_K \cong \ZZ.$ Therefore  $\Hom_{\G}(L,U_K)$ is an open subgroup of $\Hom_{\G}(L,C_K)$. To finish the proof that $N_{\G}(\Hom(L,C_K))$ is closed in $\Hom_{\G}(L,C_K)$ we can use Lemma \ref{buzz}, by letting $$A=\Hom_{\G}(L,C_K), \qquad  H=\Hom_{\G}(L,U_K), \qquad  S=N_\G(\Hom(L,C_K)),$$  and noting that we are in the situation of Lemma \ref{buzz}  since $$B=N_\G(\Hom(L,C_K)) \cap \Hom(L,U_K) = N_\G(\Hom(L,C_K)) \cap \Hom_{\G}(L,U_K),$$ is closed in $H$.
Hence in the $\ZZ$-case we have that $N_\G(\Hom(L,C_K))$ is closed in $\Hom_{\G}(L,C_K)$.

\subsubsection{\bf $\RR$-case} Here we are in a slightly easier situation, since in this case the exact sequence $$ 1 \lra U_K \lra C_K \lra \RR^{>0} \lra 1$$ splits as a sequence of $\G$-modules. Therefore the sequence 
\[0 \lra \Hom(L,U_K) \overset{\lambda}\lra \Hom(L,C_K) \overset{\mu}\lra \Hom(L,M_K) \lra 0\] also splits as a sequence of $\G$-modules, so  we get   \begin{equation} \Hom(L,C_K)=\Hom(L,U_K) \times \Hom(L,\RR),\tag{\ddag} \end{equation}
and $$N_{\G}(\Hom(L,C_K))=N_{\G}(\Hom(L,U_K)) \times N_{\G}(\Hom(L,\RR)).$$
Furthermore, if we look at the zeroth cohomology groups of $(\ddag)$ , we get $$\Hom_{\G}(L,C_K)=\Hom_{\G}(L,U_K) \times \Hom_{\G}(L,\RR). $$

\begin{prop}\label{p15} In the $\RR$-case we have that $\H^{0}(\G,\Hom(L,M_K))=0$.
	
\end{prop}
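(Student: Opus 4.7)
The plan is to exploit the fact that in the $\RR$-case, $\Hom(L, M_K) = \Hom(L, \RR)$ carries a natural $\RR$-vector space structure (with scalar multiplication on the target), and in particular it is a $\QQ$-vector space. The point is that for any $\G$-module $M$ on which multiplication by $|\G|$ is invertible, the zeroth Tate cohomology $\H^{0}(\G, M) = M^{\G}/N_{\G}(M)$ vanishes. This is the mechanism that makes the $\RR$-case easier than the $\ZZ$-case.

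First I would make precise that the $\G$-action on $\Hom(L, \RR)$ commutes with the natural $\RR$-scalar multiplication, since $\RR$ carries the trivial $\G$-action: for $\lambda \in \Hom(L,\RR)$, $g \in \G$ and $r \in \RR$, we have $(g \cdot (r\lambda))(x) = r\lambda(g^{-1}x) = r (g\lambda)(x)$. In particular $\Hom(L, \RR)^{\G}$ is an $\RR$-subspace and scalar multiplication by $1/|\G|$ makes sense on all of $\Hom(L,\RR)$ and preserves $\G$-invariants.

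The core step is then to show $\H^{0}(\G,\Hom(L,\RR)) = 0$. Given any $\G$-invariant element $m \in \Hom(L,\RR)^{\G}$, define $m' = \tfrac{1}{|\G|} m \in \Hom(L, \RR)$. Since $m$ is $\G$-invariant and scalar multiplication commutes with the $\G$-action, $m'$ is also $\G$-invariant. Hence
\[
N_{\G}(m') = \sum_{g \in \G} g \cdot m' = \sum_{g \in \G} m' = |\G| \cdot m' = m,
\]
so $m \in N_{\G}(\Hom(L,\RR))$. Thus $\Hom(L,\RR)^{\G} = N_{\G}(\Hom(L,\RR))$, which by definition means $\H^{0}(\G, \Hom(L,M_K)) = 0$.

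There is essentially no obstacle here; the only thing to verify carefully is that the averaging trick is compatible with the $\G$-module structure on $\Hom(L,\RR)$, which is immediate from the triviality of the $\G$-action on $\RR$. (Equivalently, one could cite the general fact that Tate cohomology $\wh{H}^{i}(\G, M)$ is annihilated by $|\G|$ and hence vanishes whenever $|\G|$ acts invertibly on $M$, but the direct averaging argument above is short enough to give in full.)
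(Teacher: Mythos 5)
Your proof is correct and uses essentially the same averaging argument as the paper: take a $\G$-invariant element, divide by $|\G|$ (legitimate because $\Hom(L,\RR)$ is divisible/an $\RR$-vector space), and observe that the norm of the result recovers the original element. The paper phrases this as $N_{\G}(\varphi)=|\G|\varphi$ for $\varphi\in\Hom_{\G}(L,\RR)$ and then sets $\theta=(1/|\G|)\varphi$, which is the same computation.
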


\begin{proof} First note that $$\H^{0}(\G,\Hom(L,M_K)=\H^{0}(\G,\Hom(L,\RR))=\Hom_{\G}(L,\RR)/ N_{\G}( \Hom(L,\RR))$$ so the result will follow if we can show that any $\G$-linear homomorphism from $L$ to $\RR$ can be written as the norm of some homomorphism from $L$ to $\RR$. Now since $\G$ acts trivially on $\RR$,  we see that for any $\varphi \in \Hom_{\G}(L,\RR)$  we have $N_{\G}(\varphi) = m\varphi$ where $m = |\G|.$ Therefore, since $\RR$ is a divisible group, $\theta=(1 / m) \varphi$ is also a homomorphism from $L$ to $\RR$,  and thus $\varphi = N_{\G}(\theta)$, which gives the result.

\end{proof}

Now the proposition above tells us that $ N_{\G}(\Hom(L,\RR)) = \Hom_{\G}(L,\RR)$  and, as before,  $N_{\G}(\Hom(L,U_K))$ is  closed in $\Hom_{\G}(L,U_K)$.
It then follows that $$N_{\G}(\Hom(L,U_K)) \times N_{\G}(\Hom(L,\RR))$$ is closed in $$\Hom_{\G}(L,U_K) \times \Hom_{\G}(L,\RR).$$
Hence $N_{\G}(\Hom(L,C_K))$ is closed in $\Hom_{\G}(L,C_K)$.

So we have shown that in both the $\ZZ$-case and $\RR$-case $N_{\G}(\Hom(L,C_K))$ is closed in $\Hom_{\G}(L,C_K)$ and of finite index. Thus $N_{\G}(\Hom(L,C_K))$ is open in $\Hom_{\G}(L,C_K)$, which proves Proposition \ref{p10}.
\begin{prop}\label{4.16} A 1-cocycle in $Z^{1}(\W,\wh{T}_D)$ is continuous if and only if its restriction to $C_K$ is continuous.
\end{prop}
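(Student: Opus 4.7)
The plan is to exploit the fact that $C_K$ sits inside $W_{K/F}$ as an \emph{open} normal subgroup with finite (discrete) quotient $\G$, together with the observation that $C_K$ acts trivially on $\wh{T}_D$ (since the action factors through $\sigma: W_{K/F} \to \G$, and $C_K = \ker \sigma$). One direction is immediate: restriction to a subspace preserves continuity, so a continuous $1$-cocycle restricts to a continuous map on $C_K$.

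For the non-trivial direction, suppose $f \in Z^1(W_{K/F},\wh{T}_D)$ has continuous restriction $f|_{C_K}$. Using the coset representatives $\{w_g : g \in \G\}$ fixed in Notation \ref{ckreps}, every $w \in W_{K/F}$ has a unique expression $w = a\, w_g$ with $a \in C_K$ and $g \in \G$, so $W_{K/F}$ is the disjoint union
\[
W_{K/F} \;=\; \bigsqcup_{g \in \G} C_K\, w_g.
\]
Since $\G$ is finite and carries the discrete topology, $C_K$ is open in $W_{K/F}$, hence each coset $C_K w_g$ is open (and closed) in $W_{K/F}$. Therefore $f$ is continuous on $W_{K/F}$ if and only if its restriction to each coset $C_K w_g$ is continuous.

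On the coset $C_K w_g$ the cocycle condition gives
\[
f(a w_g) \;=\; f(a) + a \cdot f(w_g) \qquad (a \in C_K).
\]
Because $a \in C_K$ acts on $\wh{T}_D$ through $\sigma(a) = 1 \in \G$, we have $a \cdot f(w_g) = f(w_g)$, so
\[
f(a w_g) \;=\; f(a) + f(w_g).
\]
Thus on each coset the map $f$ is the composition of right-multiplication by $w_g^{-1}$ (a homeomorphism $C_K w_g \to C_K$), the continuous map $f|_{C_K}$, and translation by the constant $f(w_g)$ in $\wh{T}_D$. Each of these is continuous, so $f$ is continuous on $C_K w_g$; running over the finitely many cosets gives continuity of $f$ on all of $W_{K/F}$.

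There is no real obstacle: the only thing to check carefully is that $C_K$ is open in the Weil topology (so the cosets are clopen and continuity is a local-on-cosets condition) and that $C_K$ acts trivially on $\wh{T}_D$ (so the cocycle becomes a genuine translate of $f|_{C_K}$ on each coset). Both of these are built into the setup of Section~1.
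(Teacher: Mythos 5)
Your proof is correct and follows the same overall strategy as the paper's: decompose $W_{K/F}$ into open cosets of $C_K$ and use the cocycle relation to reduce continuity of $f$ on a coset to continuity of $f|_{C_K}$. The one difference worth noting is the choice of coset parametrization. You write elements of a coset as $a w_g$, so the cocycle identity becomes $f(a w_g) = f(a) + a\cdot f(w_g) = f(a) + f(w_g)$, the last step using that $C_K$ acts trivially on $\wh{T}_D$ (since $C_K = \ker\sigma$); on each coset $f$ is then literally a translate of $f|_{C_K}$. The paper instead writes $wa$ and gets $x(wa) = w\cdot x(a) + x(w)$, which forces an extra observation that the $\W$-action on $\wh{T}_D$ is continuous (because it factors through the finite group $\G$). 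Your version sidesteps that step entirely by exploiting the triviality of the $C_K$-action, and it also makes explicit the point -- left implicit in the paper -- that $C_K$ is open in $W_{K/F}$ so that continuity really is a coset-by-coset condition. A small but genuine tidying-up of essentially the same argument.
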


\begin{proof}Clearly if $x \in Z_{cts}^{1}(\W,\wh{T}_D)$, then its restriction to $C_K$ will also be continuous, so we only need to prove the other direction.
	
	If $x \in Z^{1}(\W,\wh{T}_D)$ is continuous on $C_K$, define $\s(a)=x(wa)$, where $w \in \W$, $a \in C_K$, then in order to prove that $x$ is continuous on the coset $wC_K$ we only need to prove that $\s$ is continuous. Now, since $x$ is a 1-cocycle, we have $$\s(a)=x(wa)=wx(a)+x(w).$$ So as $a$ goes through $C_K$, we have that $wx(-)$ is continuous since $x(-)$ is continuous on $C_K$ and the action of $\W$ is continuous (since it is induced from the continuous action of $\G$ on $\wh{T}_{D}$). Therefore, since $x(w)$ is just a constant,  $\s$ is continuous and hence $x$ is continuous on $wC_K$. From this it follows that $x$ is continuous on all of $\W$.
	
\end{proof}

Now observe that we have the following diagram:
$$
\xymatrix@C=2em@R=2em{
	H^{1}(\W,\wh{T}_D) \ar[d]^{\Res} \ar[r]^-{\sim} & \Hom(H_{1}(\W,\L),D) \ar@{->>}[d]^{\Cor'}\\
	H^{1}(C_K,\wh{T}_D)  \ar[r]^-{\sim} & \Hom(H_{1}(C_K,\L),D),}
$$
where the horizontal arrows are isomorphisms given by Proposition \ref{p3}, $\Res$ is the standard restriction map on cohomology groups, and $\Cor'$ is the surjective map induced from $$\Cor:H_{1}(C_K,\L) \lra H_{1}(\W,\L).$$ Now it is easy to show this diagram is in fact commutative since if we take a 1-cocyle $f \in  Z^{1}(\W,\wh{T}_D) $ and first move along the top of the diagram, then by Remark \ref{rmkiso} and the definition of $\Cor$ for homology, we get that $f$ maps to the homomorphism sending $x \in H_{1}(C_K,\L)$ to $$\sum_{a \in C_K} \langle f(a) , x(a) \rangle. $$  But this is clearly the same as going around the diagram in the other direction.

This diagram together with Proposition \ref{p10} and Proposition \ref{4.16},  reduce Proposition \ref{continu} to proving the following:
\begin{prop}
	Let $$\Psi:H^{1}(C_K,\wh{T}_D) \overset{\sim}\lra  \Hom(H_{1}(C_K,\L),D).$$ If $f$ is a 1-cocycle in $Z^{1}(C_K,\wh{T}_D)$, then $f$ is continuous if and only if $\Psi([f])$ is a continuous homomorphism in $\Hom(H_{1}(C_K,\L),D)$ or, what is the same, in $\Hom(\L \otimes C_K,D)$.
\end{prop}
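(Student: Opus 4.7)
The plan is to reduce to a coordinate computation by choosing a $\ZZ$-basis of the finitely generated free $\ZZ$-module $\L$ and unraveling what $\Psi$ does explicitly on a cocycle.

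First I would observe that since $C_K$ acts trivially on $\wh{T}_D = \Hom(\L, D)$, we have $Z^{1}(C_K, \wh{T}_D) = \Hom(C_K, \wh{T}_D)$, so a cocycle $f$ is simply a group homomorphism $f : C_K \to \Hom(\L, D)$. Applying Remark \ref{rmkiso} with $G = C_K$ and $B = \L$, the isomorphism $\Psi$ sends $[f]$ to the homomorphism $H_1(C_K, \L) \to D$ defined on a $1$-cycle $z$ by $\sum_{a \in C_K} f(a)(z(a))$. Composing with the $\G$-isomorphism $\L \otimes C_K \cong H_1(C_K, \L)$ of Proposition \ref{p2} (which sends $\wh{\l} \otimes a$ to the $1$-cycle supported at $a$ with value $\wh{\l}$), this yields the explicit formula $\Psi([f])(\wh{\l} \otimes a) = f(a)(\wh{\l})$.

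Next, I would fix a $\ZZ$-basis $e_1, \ldots, e_n$ of $\L$. This gives two compatible coordinatisations. The evaluation map $g \mapsto (g(e_1), \ldots, g(e_n))$ is a homeomorphism $\wh{T}_D \cong D^n$, since the topology on $\wh{T}_D$ is, by construction, the product topology via such a basis. Similarly, the map $\sum_i e_i \otimes a_i \mapsto (a_1, \ldots, a_n)$ is a homeomorphism $\L \otimes C_K \cong C_K^n$, because the topology on $\L \otimes C_K$ is the one transported from $\Hom(L, C_K) \cong C_K^n$ via Proposition \ref{p2}. Setting $f_i(a) := f(a)(e_i)$, the cocycle $f$ corresponds to the tuple $(f_1, \ldots, f_n) : C_K \to D^n$, while $\Psi([f])$ corresponds to the homomorphism $\phi : C_K^n \to D$ given by $\phi(a_1, \ldots, a_n) = \sum_{i=1}^n f_i(a_i)$.

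Finally, the equivalence follows by elementary continuity arguments. If $f$ is continuous then each $f_i$ is continuous (as the composition of $f$ with the $i$-th projection $D^n \to D$), so $\phi$ is a finite sum of continuous maps and hence continuous. Conversely, if $\phi$ is continuous then precomposing with the continuous inclusion $a \mapsto (1, \ldots, 1, a, 1, \ldots, 1)$ in the $i$-th coordinate yields the continuous map $a \mapsto f_i(a)$, using that each $f_j$ is a group homomorphism and therefore sends the identity of $C_K$ to the identity of $D$. Thus every $f_i$ is continuous and so is $f$. The only nontrivial part of this argument is verifying that the topology on $\L \otimes C_K$ really does agree with the product topology on $C_K^n$ induced by the chosen basis of $\L$; once that compatibility is granted, the conclusion is immediate.
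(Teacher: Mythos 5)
Your proof is correct and follows essentially the same route as the paper: both start by using Remark \ref{rmkiso} and Proposition \ref{p2} to identify $\Psi([f])$ explicitly as $\wh{\l}\otimes a\mapsto f(a)(\wh{\l})$, and then argue about continuity of this pairing. The one worthwhile difference is that where the paper simply asserts ``it is clear'' after noting the pairing $\L\times\wh{T}_D\to D$ is continuous, you make the conclusion genuinely transparent by fixing a $\ZZ$-basis of $\L$, identifying $\wh{T}_D\cong D^n$ and $\L\otimes C_K\cong C_K^n$ compatibly, and checking both directions coordinatewise (using $f_j(1)=0$ for the converse); this is a useful filling-in of a step the paper leaves implicit.
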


\begin{proof}
	In this case we can see what $f$ maps to. It will correspond to the homomorphism  $\Psi([f]) \in \Hom(\L \otimes C_K, D)$
	$$
	\Psi([f]):\wh{\l} \otimes a \longmapsto \langle \wh{\l} , f(a) \rangle$$ where, in this case, we have that $\langle-,- \rangle$ is the natural bilinear mapping $\langle -,-\rangle : \L \times \wh{T}_D \to D.$ This bilinear map can easily be seen to be continuous by observing that $\L$ has the discrete topology and $\wh{T}_{D}$ has topology induced by that of $D$.
	Then with this it is clear that $f$ will be a continuous 1-cocycle if and only if $\Psi(\bar{f})$ is a continuous homomorphism.
\end{proof}
Hence we have proven Proposition \ref{continu}, which completes the proof of  Theorem \ref{t1}.

\section{\bf Applications to algebraic tori}

Our next goal is to use this to say in a bit more detail how Theorem \ref{t1} relates to algebraic tori. Recall that at the start we identified $T(K)$ (the group of $K$-rational points) with $\Hom(L,K^{\times})$. In the case that $K$ is a global field, we have an exact sequence $$1 \lra K^{\times} \lra \AA_{K}^{\times} \lra C_K \lra 1,$$ which,  since $L$ is free (and hence projective),  gives us the exact sequence
\[\xymatrix@C=2em@R=1em{
	0 \ar[r] & \Hom(L,K^{\times}) \ar[r] & \Hom(L,\AA_{K}^{\times}) \ar[r] &  \Hom(L,C_K) \ar[r] & 0.\\
	& T(K) \ar@2{=}[u] & T(\AA_{K}) \ar@{=}[u] & &
	&&&&}\]
Hence we can identify $T(\AA_K) / T(K)$ with $\Hom(L,C_K)$. Also from this we obtain a long exact sequence $$0 \lra T(F) \lra T(\AA_F) \lra \Hom_{\G}(L,C_K) \lra H^{1}(\G,T(K)) \lra \cdots, $$ which we will use later. Next we need a result that allows us to switch between $\W$ and $W_F$.

\begin{prop}\label{p16}
	Let $D$ be a Hausdorff divisible abelian topological group. Then the inflation map $$\Inf: H_{cts}^{1}(\W,\wh{T}_D) \lra H_{cts}^{1}(W_F,\wh{T}_D)$$ is bijective. 
\end{prop}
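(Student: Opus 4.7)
The plan is to argue that every continuous $1$-cocycle $f : W_F \to \wh{T}_D$ automatically factors through the projection $W_F \twoheadrightarrow \W$, by restricting to the open subgroup $W_K = W_{\overline F/K}$ and exploiting that it acts trivially on the Hausdorff coefficient module.

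The starting observation is that, because $T$ splits over $K$, the action of $W_F$ on $\wh{T}_D = \Hom(\wh L, D)$ factors through $W_F \twoheadrightarrow \G$. Consequently the open subgroup $W_K \subset W_F$ (the preimage of $\Gal(\overline F/K)$) acts trivially on $\wh{T}_D$. By Tate's construction of $\W$ in \cite{tatewg}, the kernel $N$ of $W_F \twoheadrightarrow \W$ equals the closure $\overline{[W_K,W_K]}$ of the commutator subgroup of $W_K$ inside $W_F$, and in particular $N \subseteq W_K$.

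For injectivity, I would give the direct argument: if $\bar f \in Z^1_{cts}(\W, \wh{T}_D)$ satisfies $\Inf(\bar f)(w) = w \cdot t - t$ for some $t \in \wh{T}_D$ and all $w \in W_F$, then because $w \cdot t$ depends only on the image of $w$ in $\G$, the right-hand side factors through $\W$, so $\bar f$ is itself the coboundary of $t$ in $Z^1_{cts}(\W, \wh{T}_D)$.

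For surjectivity, the plan is to take $f \in Z^1_{cts}(W_F, \wh{T}_D)$ and restrict it to $W_K$. Since $W_K$ acts trivially on $\wh{T}_D$, the cocycle relation degenerates and $f|_{W_K} : W_K \to \wh{T}_D$ is a continuous group homomorphism into $\wh{T}_D \cong D^{\operatorname{rank}\wh L}$, which is Hausdorff by the hypothesis on $D$. Any continuous homomorphism from a topological group into a Hausdorff abelian group kills the closure of its commutator subgroup, hence $f|_{W_K}$ annihilates $N = \overline{[W_K,W_K]}$. Therefore $f$ descends to a $1$-cocycle $\bar f$ on $W_F/N = \W$; continuity of $\bar f$ follows from the fact that $W_F \twoheadrightarrow \W$ is an open continuous surjection, and by construction $\Inf(\bar f) = f$.

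The main point requiring care is not a deep theorem but a bit of topological bookkeeping: pinning down the identification of $N$ with $\overline{[W_K,W_K]}$ via \cite{tatewg}, and confirming that the quotient topology on $W_F/N$ coincides with the topology on $\W$ used throughout the paper. The Hausdorff hypothesis on $D$ is exactly what is needed to pass from the automatic vanishing of $f|_{W_K}$ on $[W_K,W_K]$ to vanishing on its closure; without it, the argument would fail precisely at this step.
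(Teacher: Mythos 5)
Your proposal is essentially the paper's own proof: both reduce surjectivity to noting that $f|_{W_K}$ is a continuous homomorphism into the abelian Hausdorff group $\wh{T}_D$ (because $W_K$ acts trivially, since $T$ splits over $K$), hence kills the closure of $[W_K,W_K]$, which is the kernel of $W_F\twoheadrightarrow\W$. The only difference is that you spell out injectivity by an explicit coboundary computation and flag the topological bookkeeping on the quotient, whereas the paper invokes the standard injectivity of inflation and leaves the rest implicit.
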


\begin{proof}(See \cite[p.\ 111]{milneadt})  First recall that the inflation map is always injective and so is its restriction to $H_{cts}^{1}$, so we only need to prove that in this case it is also surjective.
	So take a continuous 1-cocycle $f : W_F \to \wh{T}_D,$ this will restrict to a continuous 1-cocycle in $Z_{cts}^{1}(W_K,\wh{T}_D)$. Now the kernel of $f$ must contain the commutator group of $W_K$ since $\wh{T}_{D}$ is commutative. Furthermore, since $\wh{T}_D$ is Hausdorff, the kernel of $f$ must also be closed. So $f$ must be trivial on $W_{K}^{c}$ (the closure of the commutator group) and hence it must factor through $W_{F} / W_{K}^{c}=\W$, giving that $\Inf$ is also surjective.

\end{proof}

Recall that for $F$ a global field, we say an element in $H_{cts}^{1}(W_F,\wh{T}_{D})$ is locally trivial if it restricts to zero in $H_{cts}^{1}(W_{F_v},\wh{T}_{D})$ for all places $v$ of $F$.

\begin{thm}\label{t2}
	\begin{enumerate}[(a)]
		\item If $K$ is a local field, then  $H_{cts}^{1}(\W,\wh{T}_D)$ is isomorphic to $ \Hom_{cts}(T(F),D)$.
		\item Let $D'$ be a divisible abelian topological group such that for any finite group $G$, $\Hom(G,D')$ is finite, and let $$\wh{T}_{D'}=\Hom(\L,D').$$ 
		If $K$ is a global field, then there is a canonical  surjective homomorphism $$\H_{cts}^{1}(\W,\wh{T}_{D'}) \lra \Hom_{cts}(T(\AA_F) /T(F) , D'),$$and  the kernel of this homomorphism is finite. Furthermore, if $D'$ is also Hausdorff, then the kernel consists of the locally trivial classes.

	\end{enumerate}

\end{thm}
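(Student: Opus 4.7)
Part (a) falls out of Theorem \ref{t1} with no further work: when $K$ is a local field, $C_K = K^\times$ by definition, so
\[\Hom_{\G}(L, C_K) = \Hom_{\G}(L, K^\times) = T(K)^{\G} = T(F),\]
and Theorem \ref{t1} gives directly the isomorphism $H_{cts}^{1}(\W, \wh{T}_D) \cong \Hom_{cts}(T(F), D)$.

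For part (b), the plan is to combine Theorem \ref{t1} with the long exact sequence derived at the start of Section 3. Writing $Q = \im\bigl(\Hom_{\G}(L, C_K) \overset{\delta}{\to} H^{1}(\G, T(K))\bigr)$, one obtains a short exact sequence
\[0 \lra T(\AA_F)/T(F) \lra \Hom_{\G}(L, C_K) \lra Q \lra 0,\]
and composing the isomorphism of Theorem \ref{t1} with restriction along the inclusion yields the candidate map
\[H_{cts}^{1}(\W, \wh{T}_{D'}) \cong \Hom_{cts}(\Hom_{\G}(L,C_K), D') \lra \Hom_{cts}(T(\AA_F)/T(F), D').\]
The crucial technical input is that $Q$ is finite; I would obtain this from the classical finiteness of $H^{1}(\G, T(K))$ for a torus split by a finite Galois extension of a global field, itself provable by Tate--Nakayama and finiteness of Tate cohomology of $\wh{L}$ in the spirit of Lemma \ref{fg} and Proposition \ref{p11}. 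Given finiteness of $Q$, the subgroup $T(\AA_F)/T(F)$ is open in $\Hom_{\G}(L,C_K)$ (its quotient is discrete), whereupon surjectivity of the displayed map follows from divisibility of $D'$: any continuous $\phi: T(\AA_F)/T(F) \to D'$ extends algebraically to $\Hom_{\G}(L,C_K)$ because $D'$ is injective as a $\ZZ$-module, and the extension is continuous since its restriction to the open subgroup $T(\AA_F)/T(F)$ is. The kernel equals $\Hom_{cts}(Q,D') = \Hom(Q,D')$ (continuity is automatic for $Q$ discrete finite), which is finite by the hypothesis on $D'$.

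For the final assertion, assume $D'$ is Hausdorff. By Proposition \ref{p16} applied globally and at each completion, one may identify $H_{cts}^{1}(\W, \wh{T}_{D'}) \cong H_{cts}^{1}(W_F, \wh{T}_{D'})$ and $H_{cts}^{1}(W_{K_w/F_v}, \wh{T}_{D'}) \cong H_{cts}^{1}(W_{F_v}, \wh{T}_{D'})$ for each place $v$ of $F$ (and any $w \mid v$). Using Proposition \ref{weill} and the naturality of the construction in Theorem \ref{t1}, I would then produce, for every place $v$, a commutative square
\[\begin{tikzcd}
H_{cts}^{1}(W_F, \wh{T}_{D'}) \ar[r] \ar[d] & H_{cts}^{1}(W_{F_v}, \wh{T}_{D'}) \ar[d] \\
\Hom_{cts}(T(\AA_F)/T(F), D') \ar[r] & \Hom_{cts}(T(F_v), D')
\end{tikzcd}\]
in which the right vertical is the isomorphism from part (a) applied at $F_v$ and the lower horizontal is restriction along the inclusion $T(F_v) \hookrightarrow T(\AA_F) \twoheadrightarrow T(\AA_F)/T(F)$. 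An element lies in the kernel of the global surjection iff the corresponding continuous hom on $T(\AA_F)$ vanishes on $T(F)$ and on each $T(F_v)$; commutativity of the square then shows this is exactly the condition that the class be locally trivial — the forward implication is immediate from the diagram, while the converse uses that continuous images of $T(F_v)$ topologically generate $T(\AA_F)$, so any continuous hom to a Hausdorff $D'$ vanishing on each $T(F_v)$ vanishes on $T(\AA_F)$.

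The two places I expect to spend most effort are the finiteness of $Q$ — for which the cleanest path is to invoke standard finiteness for tori over global fields, though one could also try to read it off the Tate--Nakayama/cohomology machinery already developed — and the verification that the square above truly commutes, which requires carefully chasing a 1-cocycle on $W_F$ through both the restriction to $W_{F_v}$ and the identifications provided by Theorem \ref{t1} at the local and global levels.
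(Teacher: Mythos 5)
Your argument for part (a) is exactly the paper's, and your treatment of the last assertion (locally trivial classes) also matches the paper: pass to $W_F$ via Proposition \ref{p16}, form the square relating the global and local surjections using Proposition \ref{weill}, and conclude by showing that a continuous homomorphism into Hausdorff $D'$ vanishing on each $T(F_v)$ is trivial because $\bigoplus_v T(F_v)$ is dense in $T(\AA_F)$ (so its closed kernel must be everything). The genuine divergence is in how you establish finiteness of the cokernel. You set $Q = \im\bigl(\Hom_{\G}(L,C_K)\to H^1(\G,T(K))\bigr)$ and propose to bound it by the ``classical'' finiteness of $H^1(\G,T(K))$ for a torus over a global field. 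That result is true, but it is a noticeably deeper fact than anything the paper has developed, and it is not a clean consequence of Lemma \ref{fg} and Proposition \ref{p11} alone (Proposition \ref{p11} controls $\wh{H}^{i}(\G,\Hom(L,C_K))$, not $H^1(\G,\Hom(L,K^\times))$). The paper sidesteps all of this with an elementary observation you miss: from the commutative square relating the norm maps on $T(\AA_K)\to T(\AA_F)$ and $\Hom(L,C_K)\to\Hom_{\G}(L,C_K)$, one sees $N_{\G}(\Hom(L,C_K))\subseteq T(\AA_F)/T(F)\subseteq\Hom_{\G}(L,C_K)$, and Proposition \ref{p11} with $i=0$ already gives that $N_{\G}(\Hom(L,C_K))$ has finite index; hence the same is true of $T(\AA_F)/T(F)$, and $Q$ is a quotient of a finite group. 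This keeps the proof fully self-contained. On the other hand, your surjectivity argument is slightly more careful than the paper's: you explicitly note that $T(\AA_F)/T(F)$ is open (finite-index subgroup with discrete quotient) and extend continuous homomorphisms using divisibility of $D'$ together with openness, whereas the paper appeals to exactness of $\Hom(-,D')$ and lets continuity be carried along implicitly; your version makes that step airtight. One small wording issue: ``lies in the kernel iff the corresponding hom on $T(\AA_F)$ vanishes on $T(F)$ and on each $T(F_v)$'' conflates the two conditions — being in the kernel just means the hom on $T(\AA_F)/T(F)$ is zero; what the diagram chase shows is that this is equivalent to vanishing on all the $T(F_v)$, which is the content you actually prove.
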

\begin{proof}
	
	\begin{enumerate}[(a)]
		\item This follows immediately from Theorem \ref{t1}, since in this case $C_K=K^{\times}$ and therefore $T(F) =T(K)^{\G}=\Hom_{\G}(L,C_K)$, and the isomorphism in question is exactly the one we found in Theorem \ref{t1}. 
		
		\item We begin by considering the following commutative diagram
		
		\[
		\xymatrix@C=2em@R=3em{
			0 \ar[r]&T(K) \ar[r]& T(\AA_K) \ar[r] \ar[d]^{N_\G}  &  \Hom(L,C_K) \ar[r] \ar[d]^{N_\G} & 0.\\
			0 \ar[r] & T(F) \ar[r]  &T(\AA_{F}) \ar[r] & \Hom_{\G}(L,C_K) \ar[r] & \cdots}
		\]
		From before, we already know both rows are exact and the commutativity of the diagram tells us that we must have $N_{\G}(\Hom(L,C_K))$ contained in $T(\AA_F) /T(F)$. By Proposition \ref{p11} we know that $N_{\G}(\Hom(L,C_K))$ has finite index in $\Hom_{\G}(L,C_K)$, so it follows that  $T(\AA_F) /T(F)$ has finite index in $\Hom_{\G}(L,C_K)$. Therefore we have the following exact sequence \[0 \lra T(\AA_F) /T(F) \lra \Hom_{\G}(L,C_K) \lra G \lra 0\] where $G$ is some finite group. If we then apply the functor $\Hom(-,D')$ (which is an exact functor since $D'$ is  $\ZZ$-injective) we get the exact sequence \begin{center}\vspace{-0.5cm}
			\adjustbox{max width=\linewidth}{%
				\begin{tikzcd} 0 \to \Hom(G,D') \to \Hom(\Hom_{\G}(L,C_K),D') \to  \Hom(T(\AA_F) /T(F), D') \to 0 .\end{tikzcd}
			}	
		\end{center}
		Now by assumption  $\Hom(G,D')$ is a finite group, so it follows by Theorem \ref{t1} that we have a  homomorphism from $H_{cts}^{1}(\W,\wh{T}_{D'})$ onto $\Hom_{cts}(T(\AA_F) /T(F),D')$ with finite kernel.
		
		Next we want prove that if $D'$ is Hausdorff, then the kernel consists of locally trivial classes. With this in mind we use \ref{p16} and \ref{weill} to form the commutative diagram
		$$
		\xymatrix@C=4em@R=3em{
			H_{cts}^{1}(W_{F},\wh{T}_{D'})  \ar[r] \ar[d] & \Hom_{cts}(T(\AA_F) /T(F) , D') \ar[d] \\
			\prod_{v} H_{cts}^{1}(W_{F_v},\wh{T}_{D'})  \ar[r]^-{\sim} & \prod_{v} \Hom_{cts}(T(F_{v}),D'). \\
		}$$
		From part (a) we have that the lower horizontal arrow will be an isomorphism and the result will follow if we can prove that the second vertical arrow is injective.
		
		Let $\chi: T(\AA_F) /T(F) \lra D'$ be a continuous homomorphism, whose restriction to $T(F_{v})$ is the trivial homomorphism for all places $v$. We want to show that this is in fact the trivial homomorphism. Note that we have  $$\bigoplus_v T(F_v) \subseteq \ker \chi.$$ Now since $D'$ is Hausdorff, we have that $\ker \chi$ is closed in $T(\AA_F)/T(F)$. Therefore the result will follow if we can show that $\bigoplus_v T(F_v) $ is dense in $T(\AA_F)$, since this would mean that $T(\AA_F)$ is also in the kernel of $\chi$, which makes $\chi$ trivial. Now, since $T(\AA_F)$ is the restricted topological product of $T(F_v)$ with respect to $T(\mathcal{O}_v)$, it is easy to see that any non-empty open set in $T(\AA_F)$ meets $\bigoplus_v T(F_v)$. Hence the result follows.
		
	\end{enumerate}

\end{proof}

If we now take $D=\CC_p^\times$ then the above theorem relates admissible homomorphisms $\W \to \wh{T}_{\CC_p^\times} \rtimes \G$ to continuous representations of $T$ on the "$p$-adic Banach space" $\CC_p^\times$. This can be seen as the $\GL_1$ case of more general conjectures of the $p$-adic Langlands programme for $\GL_n$ which link admissible $n$-dimensional representations of $\Gal(\overline{\QQ}_p/\QQ_p)$ to certain continuous $n$-dimensional representations of $\GL_n$ on $p$-adic Banach spaces (cf. \cite[p. 116]{milneadt}). Specifically, if one takes a more general reductive group $G$ with Langlands dual group ${}^LG$, then the Langlands program links admissible homomorphisms $\W \to {}^LG$ with irreducible automorphic representations of $G$ (which are the analogue of our group $\Hom_{cts}(T(F),D)$). For more details and survey of what is known about the $p$-adic Langlands programme we refer the reader to \cite{BreuEmerg}.

\bibliographystyle{alpha}

\bibliography{bibli}

\end{document}